\documentclass[11pt,reqno]{amsart}
\usepackage[shortlabels]{enumitem}
\usepackage{esint}
\usepackage{physics}
\usepackage{relsize}
\usepackage[backref]{hyperref}
\usepackage{mathtools}
\usepackage{amsfonts}
\usepackage[all]{xy}
\usepackage{geometry}
\usepackage{amsmath,amssymb} 
\usepackage{faktor}
\usepackage{amscd}
\usepackage{tikz-cd}
\usepackage{booktabs,tabularx,array}
\usepackage{capt-of}

\newcommand{\subsectionnotoc}[1]{%
  \par\addvspace{.5\baselineskip plus .7\baselineskip}%
  \noindent{\normalfont\bfseries #1.}\enspace\ignorespaces
}

\newtheorem{theorem}{Theorem}[section]
\newtheorem{lemma}[theorem]{Lemma}
\newtheorem{definition}[theorem]{Definition}

\theoremstyle{corollary}
\newtheorem{corollary}[theorem]{Corollary}
\theoremstyle{conjecture}

\theoremstyle{assumption}

\theoremstyle{proposition}
\newtheorem{proposition}[theorem]{Proposition}
\theoremstyle{remark}
\newtheorem{remark}[theorem]{Remark}
\numberwithin{equation}{section}
\everymath{\displaystyle}

\newcommand{\ii}{\ensuremath{\sqrt{-1}}}
\newcommand{\pp}{\bar\partial}
\newcommand{\C}{\mathbb{C}}
\newcommand{\noop}[1]{}

\DeclareMathOperator{\Ric}{Ric}
\newcommand{\Aut}{\operatorname{Aut}}
\DeclareMathOperator{\Hess}{Hess}

\title{On compact steady pluriclosed soliton surfaces}
\author{Junsheng Zhang}
\address{Courant Institute of Mathematical Sciences\\
  New York University, 251 Mercer St\\
  New York, NY 10012\\}
\curraddr{}
\email{jz7561@nyu.edu}
\author{Keshu Zhou}
\address{Department of Mathematics, University of California, Berkeley, CA 94720, USA} 
\email{keshu\_zhou@berkeley.edu}

\date{}

\begin{document}

\begin{abstract}
    We complete the classification of non-K\"ahler steady pluriclosed soliton on compact complex surfaces, as initiated by Streets. The underlying complex structure must be a  Hopf surface, with any finite primary cover being of class $1$. Moreover, the soliton metric is unique up to biholomorphism.
\end{abstract}

\maketitle

\section{Introduction}

Let $(M,J)$ be a compact connected complex surface. We say $(g,f)$ is a steady (gradient) pluriclosed soliton if $g$ is a pluriclosed metric on $(M,J)$ and $f\in C^{\infty}(M,\mathbb R)$ such that the following hold:
\begin{equation}
    \begin{aligned}
        &\Ric_g+\nabla^2 f=\frac{1}{4}H^2, \\
        &d^*H+\iota_{\nabla f}H=0.
    \end{aligned}
\end{equation}
Here $H=-d^c\omega$ is the closed Bismut torsion $3$-form, and $H^2=H_{ipq}H^{pq}_j$ is a $(0,2)$-tensor. This type of equation emerges naturally from the study of pluriclosed flows and the corresponding geometrization program for compact complex surfaces; we refer the readers to \cite{streets20} for a detailed explanation. Below we will instead restrict our attention to the steady pluriclosed soliton itself.

In \cite[Theorem 1.1]{Streets19}, it was proved that any steady soliton must either be a K\"ahler Ricci-flat metric, or the underlying surface must be biholomorphic to a Hopf surface. Moreover, Streets constructed a steady soliton metric on every Hopf surface admitting a finite primary cover of class $1$; see terminology in Section \ref{subsec--Hopf surfaces}. The remaining question is whether such a construction exhausts all compact steady soliton surfaces \cite[Remark 1.2(3)]{Streets19}. In this paper, we answer this question in the following two theorems.

\

Our first result is the classification of complex structures:
\begin{theorem}\label{thm--classification of complex structure}
     Suppose $(M,J)$ is non-K\"ahler and admits a steady pluriclosed soliton. Then any finite primary cover of $(M,J)$ is a Hopf surface of class $1$.
\end{theorem}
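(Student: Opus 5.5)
By \cite[Theorem 1.1]{Streets19}, $(M,J)$ is a Hopf surface. A finite primary cover $\tilde M=(\C^2\setminus\{0\})/\langle\gamma\rangle$ inherits the soliton by pulling back $(g,f)$. It therefore suffices to rule out primary Hopf surfaces of class $0$. In suitable coordinates these are generated by $\gamma(z,w)=(\alpha^m z+\lambda w^m,\alpha w)$ with $\lambda\neq0$, $m\ge1$ and $0<|\alpha|<1$. The plan is to show that the soliton forces a holomorphic symmetry that such a surface cannot carry.

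The first step is to extract a holomorphic vector field from the soliton. For steady pluriclosed solitons, Streets shows that the vector field $X=\tfrac12(\theta^\sharp-\nabla f)$, with $\theta$ the Lee form, is holomorphic, and that $JX$ is a Killing field preserving $\omega$ and $f$. I would reprove this from the Bismut-connection form of the soliton equations if needed. The point is that the identity $d^*H+\iota_{\nabla f}H=0$, combined with $d^c\omega=-H$, gives $L_{J\nabla f}\omega=0$ after using $\Ric^{1,1}$ of the Bismut connection. Then I would show $X\not\equiv0$. If $X=0$, then $\theta=df$ is exact, so $g$ is conformally Kähler. On a Hopf surface ($b_1=1$) this is impossible, since the conformal metric $e^{-f}g$ would be Kähler.

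The second step is to use compactness of the isometry group. The closure of the flow of $JX$ in $\mathrm{Isom}(M,g)\cap\Aut(M,J)$ is a compact torus $T$ of dimension at least one, acting holomorphically. Lift $T$ to $\C^2\setminus\{0\}$ (or to a finite cover of $T$). Its elements commute with $\gamma$ and extend holomorphically across the origin by Hartogs. Thus $T$ is realized as a compact abelian subgroup of the centralizer of $\gamma$ in the germs of biholomorphisms fixing $0$. For the class $0$ generator, this centralizer can be computed explicitly. Any holomorphic $\phi$ commuting with $\gamma$ must preserve the invariant curve $\{w=0\}$, and its linear part must commute with the nondiagonalizable linearization, or the resonant form in general. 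From this one finds that the identity component of the centralizer is $\{(z,w)\mapsto(a^m z+bw^m, aw)\}$ with $a\in\C^*$ and $b\in\C$, together with the flow generated by $\gamma$ itself.

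The final step, which I expect to be the main obstacle, is the contradiction. A compact torus inside this group must be contained in $\{b=0\}$ after conjugation. However, conjugation cannot remove $b$ while fixing $\lambda\neq0$ unless $b$ is proportional to $a^m\cdot(\text{something vanishing})$. More concretely, a compact one-parameter subgroup acts on the weight space via $(z,w)\mapsto(e^{imt}z, e^{it}w)$, up to a term in $w^m$. Commuting with $\gamma$ then forces $\lambda e^{imt}=\lambda e^{imt}$, which is trivially fine. So the obstruction must come from elsewhere: $X$ must also be nonzero and holomorphic with $X$ and $JX$ generating, and $\nabla f$ must be a gradient. On the class $0$ surface, the holomorphic vector fields are $z\partial_z$-type combinations $mz\partial_z+w\partial_w$ plus $w^m\partial_z$, and the Killing part restricts them. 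I would show that $X$ must be $c(mz\partial_z+w\partial_w)$. I would then use the explicit formula for $\theta$ in terms of the Killing potential to show that $\nabla f$ cannot be a gradient compatible with the invariant curve $\{w=0\}$. Along that curve, $f$ restricted to the elliptic curve $E=\{w=0\}/\langle\gamma\rangle$ would have to satisfy an ODE forcing the soliton to be $T^2$-symmetric, which the $\lambda w^m$ term breaks. I expect this step to need the most care, and I would try first to derive the contradiction by integrating the $\partial_w$-component of the soliton identity over $E$.
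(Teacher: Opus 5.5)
Your reduction to excluding class~$0$ primary covers and your Step~1 are fine: the Streets--Ustinovskiy field $X=\frac12(\theta^\sharp-\nabla f)$ is holomorphic, $JX$ is Killing, and $X\not\equiv0$. The argument breaks at Step~2, however, and never recovers. A torus generated by the single Killing field $JX$ gives no obstruction. By Lemma~\ref{lem--compact rank of automorphism of Hopf surfaces}, a class~$0$ surface has $\Aut^0\cong\C^2/\Lambda_0\cong\mathbb T^2\times\mathbb R^2$, which contains compact $2$-tori. Indeed the rotations $(z,w)\mapsto(e^{imt}z,e^{it}w)$ commute with $\gamma$.

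You notice this yourself (``the obstruction must come from elsewhere''), but what follows is not an argument. The claim that $X$ must be a multiple of $mz\partial_z+w\partial_w$ does not follow from compactness of the flow of $JX$. Compactness only places $JX$ in $\mathrm{span}_{\mathbb R}\Lambda_0$, and the generator $(\ell_\beta,\lambda/\beta^m)$ of $\Lambda_0$ (with $\beta$ your $\alpha$) has a nonzero $w^m\partial_z$-component. The proposed ODE for $f$ on the elliptic curve $\{w=0\}/\langle\gamma\rangle$ is never formulated, and nothing explains why it would force $T^2$-symmetry. So the proposal does not prove the theorem.

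The missing idea is the source of the extra symmetry.

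\textbf{Extra Killing fields.} By Lemma~\ref{lem--tensor algebra lemma}, $H^2=2(|\theta|^2g-\theta\otimes\theta)\ge0$, so the first soliton equation reads $\Ric_f=\frac14H^2\ge0$. Since $b_1(M)=1$, the weighted Bochner formula gives a nonzero $\Delta_f$-harmonic, hence parallel, $1$-form $\eta$ with $\Ric_f(K,K)=0$ for $K=\eta^\sharp$. Therefore $\iota_KH=0$. Using $\nabla^BJ=0$, this makes $K$ and $JK$ commuting holomorphic Killing fields. After checking $Kf=JKf=0$, they also commute with $V$ and $JV$.

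\textbf{Static case.} If $K^{1,0}\wedge V^{1,0}\equiv0$, then $V$ is a constant combination of $K$ and $JK$, hence Killing. This forces $\rho_B^{1,1}=0$, $\nabla f=0$, and $\theta$ parallel. Then $e^{-ct}\widetilde g$ is a flat K\"ahler cone, so $\gamma$ acts as $\rho U$ with $U\in U(2)$. This is diagonalizable, so the surface is of class~$1$.

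\textbf{Non-static case.} Otherwise $K$, $JK$, $JV$ span a $3$-dimensional abelian algebra of holomorphic Killing fields. Its closure is a torus of dimension at least $3$ in $\Aut^0$, which is impossible for class~$0$, where the compact rank is~$2$.

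\textbf{Alternative.} One can instead show that $e^{-f}\theta$ is parallel, build a Vaisman metric from it, and invoke Belgun's classification to exclude class~$0$.
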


We remark that in \cite{Streets19}, the fact that $(M,J)$ is a Hopf surface was obtained by using the classification of class VII surface with $b_2=0$, commonly known as Bogomolov's theorem \cite{Bogomolov76,LYZ90,Teleman94}. In the proof below, we will prove Theorem \ref{thm--classification of complex structure} without appealing to this classification theorem.

The second theorem addresses the uniqueness question of soliton metric on class $1$ Hopf surfaces. Let $(M,J)$ be a Hopf surface of class $1$, and denote by $(g_0,f_0)$ the steady soliton constructed in \cite{Streets19}, with an alternative construction in \cite{Streets-Ustinovskiy21}. Then we have:
\begin{theorem}\label{thm--uniqueness of soliton metric}
    Suppose $(g,f)$ is a steady pluriclosed soliton on $(M,J)$. Then there exists a biholomorphism $\Phi:(M,J)\to(M,J)$ and $\lambda>0$, such that $\Phi^*g=\lambda g_0$ and $\Phi^*f=f_0+\mathrm{const}$.
\end{theorem}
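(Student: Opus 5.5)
We prove Theorem \ref{thm--uniqueness of soliton metric} by reducing to the universal cover $\C^2\setminus\{0\}$ and using the symmetry forced by the soliton structure.

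\emph{Step 1: symmetries.} The first step is to show that the vector field $X=\nabla f$, together with $JX$, is holomorphic and Killing for $g$; for steady pluriclosed solitons this is standard, and we take it from \cite{Streets19}. Concretely, $X$ and $JX$ are real holomorphic, and $JX$ preserves $g$ and $f$. Hence the closure of the flow of $JX$ in the isometry group is a compact torus $T\subset\Aut(M,J)$ that preserves $(g,f)$. This torus is nontrivial, since otherwise $f$ is constant and the soliton equations force the metric to be K\"ahler Ricci-flat.

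\emph{Step 2: normal form.} Let $(M,J)=(\C^2\setminus\{0\})/\Gamma$ with $\Gamma$ generated by a class $1$ contraction $(z,w)\mapsto(\alpha z,\beta w)$, possibly together with a finite group. The connected automorphism group of $M$ is explicit: it is the centralizer of the generator in $GL_2(\C)$, modulo $\Gamma$. We lift $T$ and $X$ to $\C^2\setminus\{0\}$. Since $T$ is a compact subgroup commuting with $\mathrm{diag}(\alpha,\beta)$, after conjugating by an automorphism $\Phi$ we may take $T$ to be a subtorus of the diagonal unitary torus. We then identify the holomorphic field $X-\sqrt{-1}JX$ with a linear diagonal field $a z\partial_z+b w\partial_w$. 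Applying the same normalization to $(g_0,f_0)$, and using that the soliton vector field is determined up to scale by the conformal/cohomological data of $M$, we expect both solitons to share the same $T^2$-symmetry and the same soliton vector field up to a multiple. Matching these normalizations fixes $\Phi$ and the scale $\lambda$.

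\emph{Step 3: reduction to an ODE.} With $T^2$-invariance (the full torus, once we show $T$ is 2-dimensional, or use $JX$ together with the flow of the contraction), the pluriclosed condition together with invariance under $X$, $JX$ and the torus reduces $g$ to data depending only on the single variable $t=\log(|z|^2/|w|^2)$ (or a moment-map coordinate) on the compact quotient. In this coordinate the soliton equations $\Ric+\nabla^2f=\tfrac14H^2$ and $d^*H+\iota_{\nabla f}H=0$ become an ODE system on an interval, with boundary conditions at the two circle orbits where a torus factor collapses (the curves $z=0$ and $w=0$). A convenient formulation uses the Bismut-Ricci form $\rho_B=-\sqrt{-1}\partial\pp\log\det g$ with the soliton condition $\rho_B=-\tfrac12 L_X\omega$ (up to normalization), which integrates to a single second-order ODE for a potential.

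\emph{Step 4: uniqueness of the ODE solution.} We show that smooth closing at both ends determines the solution up to scaling, and hence recovers Streets' $g_0$. We expect this to be the main obstacle. A maximum-principle or monotonicity argument for the ODE (e.g.\ comparing two solutions with matched endpoint data and showing the difference satisfies a linear equation with no nontrivial solutions) is the first thing to try. Alternatively, one can exhibit an explicit first integral that pins down the solution, as in \cite{Streets-Ustinovskiy21}. The case where $T$ is only one-dimensional must also be excluded, or handled by showing the generic leaf closure is still a $2$-torus when $\alpha,\beta$ are chosen appropriately. Finally, $\Phi^*f=f_0+\mathrm{const}$ follows because $\nabla f$ is determined by the metric via the soliton equation.
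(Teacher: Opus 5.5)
Your proposal has a genuine gap exactly where the argument hinges: you never obtain enough symmetry to reduce the soliton system to an ODE.

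Step~1 already uses the wrong field. What \cite{Streets-Ustinovskiy22} provides is $V=\frac12(\theta^\#-\nabla f)$, with $V$ real holomorphic and $JV$ Killing. Nothing makes $J\nabla f$ Killing, and the invariance under $X=\nabla f$ that you invoke in Step~3 would force $\nabla^2f=0$, i.e.\ $f$ constant. More seriously, the closure of the flow of the single Killing field $JV$ may be a circle. Even a $T^2$ would leave a two-dimensional orbit space on $M$, and the flow of the contraction is not known to be isometric. You flag the low-dimensional case as something that ``must be excluded'' but give no mechanism, so Steps~3--4 never start.

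The missing idea is the Bakry--\'Emery observation. Since $H^2=2(|\theta|^2g-\theta\otimes\theta)\ge0$, the soliton equation gives $\Ric_f=\frac14H^2\ge0$. With $b_1(M)=1$, the weighted Bochner formula then yields a parallel field $K$ with $\Ric_f(K,K)=0$, hence $\iota_KH=0$. Using $\nabla^BJ=0$, $K$ and $JK$ are commuting real holomorphic Killing fields, and they also commute with $JV$. In the non-static case $K,JK,JV$ generate a compact torus of dimension $\ge3$. This torus is maximal by the compact-rank computation, hence conjugate to $T_{\mathrm{std}}$; this conjugacy, not any a priori determination of the soliton field, is what lets one compare two solitons. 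The $J$-invariance lemma then gives $\mathrm{span}\{K,JK\}=\mathrm{span}\{Z,JZ\}$, so $JY$ is Killing too. One lands on the cohomogeneity-one reduction of \cite{Streets-Ustinovskiy21}: the first-order autonomous ODE $k'=k(1-k)\big((1-\frac ab)k+\frac ab\big)$ in $x=\frac ba\log|z_1|^2-\log|z_2|^2$. Its solutions are unique up to translation in $x$, and that translation is the biholomorphism $(z_1,z_2)\mapsto(e^{\frac ab s}z_1,z_2)$, so no maximum principle is needed. Your variable $\log(|z|^2/|w|^2)$ is not even $\gamma$-invariant unless $|\alpha|=|\beta|$.

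Separately, your assertion that a trivial torus (constant $f$) forces a K\"ahler Ricci-flat metric is false. A Hopf surface carries no K\"ahler metric, and $\nabla f\equiv0$ with $H\ne0$ is precisely the static case, realized by the standard Hopf metric $g_{\mathrm{Euc}}/|z|^2$ on diagonal Hopf surfaces. This case needs its own argument, which the paper supplies:
\begin{itemize}
\item $\theta$ is then parallel, and the universal cover splits as $N\times\mathbb R$ with $N$ a round $3$-sphere.
\item A conformal change produces the flat cone, so $\widetilde g=\lambda A^*(g_{\mathrm{Euc}}/|z|^2)$, and the homogeneous expansion of $A$ forces $A$ to be linear.
\item Two such metrics $g_{Q_1},g_{Q_2}$ are related by $S^{-1/2}$, where $Q_2=Q_1(S\cdot,\cdot)$, and $S^{-1/2}$ commutes with the deck group.
\end{itemize}

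There are also smaller problems. In Step~2, $\Aut^0(M)$ is not the $\mathrm{GL}(2,\C)$-centralizer in type III, since it contains $(z_1,z_2)\mapsto(az_1+bz_2^m,dz_2)$. Maximal tori are $3$-dimensional and contain the non-unitary direction $Z$, so you cannot conjugate into the unitary diagonal torus. For secondary surfaces you still have to show that the finite deck group centralizes $T_{\mathrm{std}}$, hence is diagonal, so that the translation biholomorphism descends.
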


As pointed out to us by Streets, the results developed in his work  suggest that every steady pluriclosed soliton on a compact complex surface should be generalized K\"ahler \cite{GualtieriGK}. This is confirmed as a consequence of the above two results together with \cite[Theorem 1.1]{Streets-Ustinovskiy21}.

\

We briefly explain the idea of proof. The key new observation here is that the soliton equation itself implies the non-negativity of Bakry--\'Emery Ricci curvature\footnote{This was also observed in the very recent work of Streets \cite{Streets26}, applied more generally for generalized Ricci solitons.}, thus together with compactness of $M$ and restriction on $b_1(M)$, automatically produces the sufficient symmetries via Bochner techniques. The existence of real holomorphic Killing fields then puts a strong restriction on the underlying complex structure, thus ruling out the class~$0$ Hopf surface. Meanwhile, the symmetry naturally reduces the soliton equation to ODEs used in \cite{Streets-Ustinovskiy21}, from which the uniqueness of soliton metric follows.  This proof is similar in spirit to that in \cite{apostolov2023}.

\subsectionnotoc{Notation and Conventions} 
\begin{itemize}
    
\item $d^c\coloneqq\ii(\pp-\partial)$ and $\omega(\cdot,\cdot)=g(J\cdot,\cdot)$
\item The Lee form $\theta\coloneqq -(d^*\omega)\circ J$. In particular, $H=-*\theta$ in complex dimension $2$.
\item We use $\nabla^g$ to denote the Levi-Civita connection of the Riemannian metric $g$ and $d\mu_g$ to denote the volume measure associated with a Riemannian metric $g$.
\item We use $\delta$ and $d^*$ interchangeably to denote the formal adjoint of $d$ with respect to the Riemannian measure $d\mu_g$.
\item We use $\nabla^B$ to denote the Bismut connection, defined as $\nabla^B=\nabla^g-\frac12 g^{-1}d^c\omega$. The Bismut Ricci form is denoted by $\rho_B$, defined as 
\[
\rho_B(X,Y)=\frac12\langle R^B(X,Y)Je_i,e_i\rangle,
\]
where $R^B$ is the Riemann curvature tensor for Bismut connection and $\{e_i\}$ is any orthonormal basis on $T_pM$ at a given point.
\end{itemize}
 
\subsectionnotoc{Acknowledgments} 
The work grew out of a reading seminar on compact complex surfaces organized by
 Song Sun at the Institute for Advanced Study in Mathematics (IASM),
Zhejiang University, during the summer of 2026. The authors thank Song Sun for
valuable discussions that led to this work, and Jeffrey Streets for fruitful discussion and helpful comments. They also thank IASM for its
hospitality, and Yifan Chen, Mingyang Li, Chunhui Wei, Qi Yao, Yichen Zhang,
and Yuanbo Zhou for their participation in the seminar and for stimulating
discussions.

\subsectionnotoc{Declaration of AI usage} ChatGPT Pro with GPT-5.6  are used as a tool for literature searches, and for exploratory discussions of ideas and proof strategies proposed by the human authors. The author takes full responsibility for the paper's content
and correctness.

\section{Preliminaries}

\subsection{Bakry--\'Emery theory}\label{sub--BE theory}
We refer to \cite{Lott03,WW09} for the Bochner techniques in the setting of Bakry--\'Emery Ricci
curvature. We use the standard notation
\begin{equation}
     \Ric_f := \Ric_g + \Hess^g f,
\end{equation}
   which is also referred to as the $\infty$--Bakry--\'Emery Ricci curvature. The target metric measure space is the weighted space $(M^n, g ,e^{-f}d\mu_g)$. 

Let $\delta$ be the formal adjoint of $d$ with respect to unweighted Riemannian measure $d\mu_g$, and $\delta_{f}$ be the formal adjoint with respect to $e^{-f}d\mu_g$. Then we have
\begin{equation}
    \delta_f=\delta+\iota_{\nabla f}.
\end{equation}
One can then define the weighted Hodge Laplacian operator
\begin{equation}
    \Delta_f\coloneqq d\delta_f+\delta_fd
\end{equation}
acting on space of $k$-forms. The weighted Bochner formula for $1$-forms then turns out to be:
\begin{equation}
    \frac12\Delta_f(\vert\eta\vert^2)=\langle\eta,\Delta_f\eta\rangle-\vert\nabla\eta\vert^2-\Ric_f(\eta^{\#},\eta^{\#}).
\end{equation}
For the proof, we refer to \cite[Section 2]{Lott03}. Integrate over $M^n$ against the weighted measure, we then obtain:
\begin{equation}
    \int_M\langle\eta,\Delta_f\eta\rangle e^{-f}d\mu_g=\int_M\big(\vert\nabla\eta\vert^2+\Ric_f(\eta^{\#},\eta^{\#})\big)e^{-f}d\mu_g.
\end{equation}
Thus if $M^n$ is closed and $\Ric_f\geq 0$, then any $\Delta_f$-harmonic $1$-form must be parallel. This is a natural extension of the classical Bochner technique to the weighted setting.

Based on this, we have the following structural result for compact Riemannian manifolds with non-negative Bakry-\'Emery Ricci curvature.
\begin{theorem}[\cite{Lott03}, Theorem 6.6 in \cite{WW09}]\label{thm--structure of compact manifold with ricf geq 0}
    Let $(M,g,f)$ be a compact $n$-dimensional Riemannian manifold with $\Ric_f\geq 0$. Then the following holds:
    \begin{itemize}
        \item Any $\Delta_f$-harmonic $1$-form $\eta$ must be parallel and satisfies $\Ric_f(\eta^{\#},\eta^{\#})=0$ on $M$. In particular, $b_1(M)\leq n$;
        \item There exists a finite cover $\widehat M\to M$, such that we have Riemannian splitting $\widehat M\cong N\times\mathbb T^k$, where $N$ is compact and simply-connected, $T^k$ is a flat torus. Moreover, $\hat f$ is constant along $T^k$-factor and $k\geq b_1(M)$.
    \end{itemize}
\end{theorem}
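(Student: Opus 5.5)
The first bullet follows from the weighted Bochner identity recorded just above. The second requires a weighted Cheeger--Gromoll splitting argument on the universal cover, followed by a Bieberbach-type group-theoretic step to reach a finite cover.

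\emph{First bullet.} Let $\eta$ be $\Delta_f$-harmonic. The integrated weighted Bochner formula has vanishing left side. Both integrands on the right are nonnegative, so $\nabla\eta\equiv 0$ and $\Ric_f(\eta^\#,\eta^\#)\equiv 0$. For the Betti number bound, we use weighted Hodge theory: $\Delta_f$ is elliptic and self-adjoint with respect to $e^{-f}d\mu_g$, and it is conjugate to an ordinary Schr\"odinger-type operator via $\eta\mapsto e^{-f/2}\eta$. Hence every de Rham class in $H^1(M;\mathbb R)$ has a unique $\Delta_f$-harmonic representative. These representatives are parallel, so they are determined by their value at one point, which gives $b_1(M)\le n$.

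\emph{Second bullet.} I would follow Lott's argument. Pass to the universal cover $\widetilde M$ with the lifted $(\tilde g,\tilde f)$. Since $M$ is compact, $\tilde f$ is bounded and $\Ric_{\tilde f}\ge 0$. The key tool is the weighted splitting theorem: if $\widetilde M$ contains a line, then $\widetilde M$ splits isometrically as $\mathbb R\times N'$ with $\tilde f$ constant along the $\mathbb R$ factor.

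To prove it, take the Busemann functions $b_\pm$ of the two rays. Using the weighted Laplacian comparison $\Delta_{\tilde f} r\le$ (bounded correction), which holds precisely because $\tilde f$ is bounded, show that $b_\pm$ are $\Delta_{\tilde f}$-subharmonic in the barrier sense. Then $b_++b_-\ge 0$, and it vanishes on the line, so the strong maximum principle gives $b_++b_-\equiv 0$. Thus $b=b_+$ is smooth and $\Delta_{\tilde f}$-harmonic with $|\nabla b|=1$. The weighted Bochner formula applied to $db$ then gives $\nabla^2 b=0$ and $\Ric_{\tilde f}(\nabla b,\nabla b)=0$. So $\nabla b$ is parallel and $\widetilde M$ splits. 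Moreover, $\tilde f$ restricted to each $\mathbb R$-line satisfies $\tilde f''=\Ric_{\tilde f}(\nabla b,\nabla b)-\Ric(\nabla b,\nabla b)=0$, since the Ricci curvature vanishes in a flat direction. A bounded affine function on $\mathbb R$ is constant, so $\tilde f$ is constant along the line.

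Iterating, we get $\widetilde M=\mathbb R^k\times N$, where $N$ contains no lines. Because $\pi_1(M)$ acts cocompactly, $N$ must be compact, by the standard Cheeger--Gromoll argument that a noncompact cocompact factor yields a line. Then $N$ is simply connected. Since $\mathrm{Isom}(N)$ is compact and the deck group acts cocompactly on the $\mathbb R^k$ factor, the Bieberbach argument produces a finite-index subgroup acting by translations on $\mathbb R^k$ and trivially on $N$. This gives a finite cover $\widehat M\cong N\times T^k$ with $\hat f$ constant along $T^k$. Finally, $b_1(M)\le b_1(\widehat M)=k$, because $N$ is simply connected and $H^1$ injects under finite covers.

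The main obstacle is the weighted Laplacian comparison and the resulting splitting. The unweighted comparison fails without a bound on $f$, and the boundedness of $\tilde f$, which comes from compactness of $M$, is exactly what is used. I would invoke \cite{Lott03} and \cite{WW09} for this step rather than rederive it.
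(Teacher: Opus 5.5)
Your first bullet is correct and matches the paper's argument: the integrated weighted Bochner identity forces $\nabla\eta=0$ and $\Ric_f(\eta^{\#},\eta^{\#})=0$, and weighted Hodge theory identifies $H^1(M;\mathbb R)$ with the $\Delta_f$-harmonic $1$-forms. For the second bullet the paper only cites \cite{Lott03,WW09}. Your weighted Cheeger--Gromoll outline is the argument of those references: Busemann functions with the Laplacian comparison for bounded $f$, then iteration, then compactness of $N$ by cocompactness. There is one small sign slip. With $b_\pm(x)=\lim_{t\to\infty}(t-d(x,\gamma(\pm t)))$ one has $b_++b_-\le 0$, with interior maximum $0$ on the line, and that is what the maximum principle for subharmonic functions needs; you wrote the inequality the other way.

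The genuine gap is the final step. Discreteness plus Bieberbach does give a finite-index subgroup $\Gamma_1\cong\mathbb Z^k$, acting by $(x,y)\mapsto(x+\lambda,\phi(\lambda)y)$ for some homomorphism $\phi\colon\mathbb Z^k\to\mathrm{Isom}(N)$. But compactness of $\mathrm{Isom}(N)$ only makes $\overline{\phi(\mathbb Z^k)}$ a compact abelian group. Nothing forces $\phi$ to have finite image, so no finite-index subgroup need act trivially on $N$.

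This cannot be repaired, because the isometric claim is false even within this paper. Take the standard Hopf metric $|dz|^2/|z|^2$ on $(\C^2\setminus\{0\})/\langle\rho\,\mathrm{diag}(e^{i\theta_1},e^{i\theta_2})\rangle$ with $\theta_1/2\pi\notin\mathbb Q$. This is a static soliton ($f$ constant, $\Ric_f=\Ric\ge 0$) with universal cover $\mathbb R\times S^3$. The only parallel vector fields there are multiples of $\partial_t$. So a Riemannian product structure $N\times T^k$ on a finite cover must have $k=1$ and must lift to the given splitting. That would force $\mathrm{diag}(e^{im\theta_1},e^{im\theta_2})=I$ for some $m\ge 1$, which is impossible.

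What your argument, and the cited works, actually prove is weaker:
\begin{itemize}
\item $\widetilde M\cong\mathbb R^k\times N$ isometrically, with $\tilde f$ constant along $\mathbb R^k$;
\item some finite cover is isometric to $(\mathbb R^k\times N)/\mathbb Z^k$, a flat $N$-bundle over $T^k$ that is diffeomorphic, but in general not isometric, to $N\times T^k$;
\item hence $\pi_1(M)$ is virtually $\mathbb Z^k$ and $k\ge b_1(M)$.
\end{itemize}
This weaker form is all that Corollary~\ref{cor--rough complex structure determination} uses. That proof works on the universal cover and only needs $\pi_1(\widehat M)\cong\mathbb Z$.
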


\subsection{Hopf surfaces}\label{subsec--Hopf surfaces}

Here we review basic theory of Hopf surfaces.

\begin{definition}
    A Hopf surface is a compact connected complex surface, whose universal cover is biholomorphic to $\mathbb C^2\setminus\{(0,0)\}$.
\end{definition}

The following classification result was proved by Kodaira \cite[Theorem 30]{KodairaII}.
\begin{theorem}\label{thm--Kodaira's classification on Hopf surfaces}
    Let $M$ be a Hopf surface, then $\pi_1(M)$ contains a cyclic subgroup $\langle \gamma\rangle$ with finite index. The action of $\gamma$ on $\mathbb C^2\setminus\{(0,0)\}$ under some holomorphic coordinate can be written as 
    \begin{equation}
        (z_1,z_2)\mapsto (\alpha z_1+\lambda z_2^m,\beta z_2)
    \end{equation}
    for $\alpha,\beta,\lambda\in\mathbb C$ and $m\in\mathbb N$ such that $0<\vert\alpha\vert\leq\vert\beta\vert<1$ and $\lambda\cdot(\alpha-\beta^m)=0$.
\end{theorem}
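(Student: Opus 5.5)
The plan is to reduce everything to the existence of a single \emph{holomorphic contraction} in $\pi_1(M)$, and then to normalize that contraction by Poincar\'e--Dulac theory. Write $W=\mathbb C^2\setminus\{(0,0)\}$ and $G=\pi_1(M)$, acting on $W$ as deck transformations. The action is free, properly discontinuous and cocompact. By Hartogs' extension theorem every $g\in G$ extends to a holomorphic map $\mathbb C^2\to\mathbb C^2$. Applying this to $g^{-1}$ as well, the extension is a biholomorphism of $\mathbb C^2$ preserving $W$, so it fixes the origin. The steps, in order, are: (1) find a contraction $\gamma\in G$; (2) show $\langle\gamma\rangle$ has finite index in $G$; (3) bring $\gamma$ into the stated normal form.

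\textbf{Step 1 (the main obstacle).} Fix a compact set $K\subset W$ with $G\cdot K=W$; we may take $K$ inside a closed shell $\{r_1\le |z|\le r_2\}$. Choose a point $p$ with $|p|$ very small. Some $h\in G$ satisfies $h(p)\in K$, so $g=h^{-1}$ sends a point of $K$ very close to $0$. I would turn this into a uniform statement: as $|p|\to0$, properness and cocompactness (compare the images of the whole shell, which is connected and of bounded "size" in the quotient) should give $g(\{|z|\le r_2\})\subset\{|z|<r_1/2\}$ for a suitable $g$. The maximum principle for the plurisubharmonic function $|z|^2$ controls the image of the whole ball by the image of its boundary sphere, and that sphere lies in the shell. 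This containment step is where I expect the real work to be: one must rule out $g$ mapping part of the shell far out while another part goes near $0$, and I would use connectedness of the shell together with the fact that only finitely many $g$ move $K$ to meet any fixed compact set. Granting the containment, set $\gamma=g$. Then $\gamma(B_{r_2})\Subset B_{r_2}$, so the iterates $\gamma^n\to0$ uniformly on $B_{r_2}$. The Schwarz lemma and Cauchy estimates then give that $d\gamma(0)$ has eigenvalues $\alpha,\beta$ with $0<|\alpha|,|\beta|<1$; they are nonzero because $\gamma$ is a biholomorphism.

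\textbf{Step 2.} Since $\gamma$ contracts, $\langle\gamma\rangle$ acts freely and properly discontinuously on $W$. A fundamental domain is the compact region $\overline{B_{r_2}}\setminus\gamma(B_{r_2})$, so $M'=W/\langle\gamma\rangle$ is compact. The covering $M'\to M$ has fibre cardinality $[G:\langle\gamma\rangle]$. A covering of a compact manifold by a compact manifold is finite-sheeted, hence the index is finite.

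\textbf{Step 3.} Reorder the coordinates so that $|\alpha|\le|\beta|$. Since both eigenvalues lie in the open unit disc, we are in the Poincar\'e domain, so Poincar\'e--Dulac normalization converges and $\gamma$ is holomorphically conjugate to its polynomial normal form. The only possible resonances $\lambda_i=\lambda_1^{k_1}\lambda_2^{k_2}$ with $k_1+k_2\ge2$ are of the form $\alpha=\beta^m$, because $|\alpha|\le|\beta|<1$ forbids $\beta$ from being a product involving $\alpha$. Hence the normal form is $(z_1,z_2)\mapsto(\alpha z_1+\lambda z_2^m,\beta z_2)$, with $\lambda\neq0$ allowed only in the resonant case, i.e.\ $\lambda(\alpha-\beta^m)=0$.
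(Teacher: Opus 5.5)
The paper gives no proof of this statement; it quotes Kodaira [Theorem 30 of KodairaII]. Your outline (Hartogs extension, a contracting element, finite index, Poincar\'e--Dulac) is the classical route and works, but the write-up has gaps.

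\textbf{Step 1.} The containment you grant follows from the ingredients you name. Let $A=\{r_1\le|z|\le r_2\}\supset K$. Only finitely many $g$ satisfy $g(A)\cap A\neq\emptyset$, and for each of them $g(A)$ is a compact subset of $W$, hence at positive distance from $0$. Choose $|p|$ below all these distances. Then the $g$ with $g(h(p))=p$ is not among them, so $g(A)$ is connected, disjoint from $A$, and meets $B_{r_1}$. Hence $g(A)\subset B_{r_1}\setminus\{0\}$. The maximum principle gives $g(\overline B_{r_2})\subset B_{r_1}$, so $r_1/2$ is not needed. The Schwarz lemma then yields $|\gamma(z)|\le (r_1/r_2)|z|$ on $B_{r_2}$.

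\textbf{Step 2.} This is the genuine gap, and it propagates to Step 3. The compact set $D=\overline B_{r_2}\setminus\gamma(B_{r_2})$ is a fundamental domain only for the basin $\Omega=\bigcup_{n\ge0}\gamma^{-n}(B_{r_2})$. Knowing that $\gamma$ contracts near $0$ does not give $\Omega=\mathbb C^2$: automorphisms of $\mathbb C^2$ with an attracting fixed point can have a proper basin (Fatou--Bieberbach domains). You must use that $\langle\gamma\rangle$ acts properly discontinuously on all of $W$. The set $\Omega\setminus\{0\}$ is open and $\gamma$-invariant. Its image in the connected Hausdorff manifold $W/\langle\gamma\rangle$ equals the image of the compact set $D$, so it is open and closed, hence everything. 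This gives both compactness of $M'$ and $\Omega=\mathbb C^2$.

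\textbf{Step 3.} The fact $\Omega=\mathbb C^2$ is exactly what this step needs. Poincar\'e--Dulac only yields a \emph{local} conjugacy $\phi\circ\gamma=N\circ\phi$ near $0$, while the statement asks for a coordinate on all of $W$. Define $\Phi=N^{-n}\circ\phi\circ\gamma^{n}$ for $n$ large; it is well defined by the conjugacy relation. It is a biholomorphism of $\mathbb C^2$ fixing $0$, because both $\gamma$ and the polynomial normal form $N$ attract every point of $\mathbb C^2$ to $0$.

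Finally, your resonance count with $k_1+k_2\ge2$ presupposes a diagonalizable linear part. When $d\gamma(0)$ is a Jordan block the normal form is $(\alpha z_1+z_2,\alpha z_2)$. This is the case $m=1$, $\lambda\neq0$, $\alpha=\beta$ of the statement and should be listed separately.
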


Conversely, there is a topological characterization of Hopf surfaces also due to Kodaira \cite[Theorem 41]{KodairaIII}:
\begin{theorem}
    Let $(M,J)$ be a compact complex surface, such that $b_2(M)=0$ and $\pi_1(M)$ is virtually cyclic, i.e. it contains a finite index subgroup isomorphic to $\mathbb Z$. Then $(M,J)$ is a Hopf surface.
\end{theorem}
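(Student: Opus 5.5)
The first step is to reduce to the case $\pi_1(M)\cong\mathbb Z$ and pin down the Betti numbers. Since $\pi_1(M)$ is infinite and virtually cyclic, its abelianization has rank at most $1$, so $b_1(M)\le 1$. If $b_1(M)$ were even, $(M,J)$ would be K\"ahler. A K\"ahler class $[\omega]$ satisfies $\int_M\omega^2>0$, which forces $b_2\ge 1$. Hence $b_2(M)=0$ gives $b_1(M)=1$ and $\chi(M)=2-2b_1+b_2=0$.

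Next let $\widehat M\to M$ be the finite cover of degree $d$ corresponding to a finite-index subgroup $\mathbb Z\subset\pi_1(M)$. Then $\pi_1(\widehat M)=\mathbb Z$ and $b_1(\widehat M)=1$. Multiplicativity of the Euler characteristic gives $\chi(\widehat M)=d\,\chi(M)=0$, so $b_2(\widehat M)=0$. Thus $\widehat M$ is again a non-K\"ahler surface of class VII type with $b_1=1$, $b_2=0$. It suffices to show that the universal cover $W$ of $\widehat M$, which is an infinite cyclic cover with deck generator $\gamma$, is biholomorphic to $\mathbb C^2\setminus\{0\}$.

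The topological step is a Milnor-type argument for infinite cyclic covers. From the Wang sequence for $W\to\widehat M$, together with $\chi(\widehat M)=0$, Poincar\'e duality and the vanishing of $b_2$, I would show that $H_*(W;\mathbb Z)$ is finitely generated and equal to $H_*(S^3)$. As $W$ is simply connected, it is then a homotopy $S^3$. It has two ends, interchanged in the sense that $\gamma$ pushes one end to the other.

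The complex-analytic step is the heart of the matter, and I expect it to be the main obstacle. The plan is to show that one end of $W$ is pseudoconcave and can be compactified by adding a single point $o$, so that $\widetilde W:=W\cup\{o\}$ is a smooth contractible complex surface on which $\gamma$ extends holomorphically. The extended $\gamma$ should fix $o$ and be attracting there, with basin of attraction all of $\widetilde W$.

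To produce the point, I would first try to find holomorphic functions on $W$ separating the end. One route is the Kodaira approach: Riemann--Roch on $\widehat M$ with $\chi(\mathcal O)=0$ and a twisting flat line bundle coming from the character of $\pi_1=\mathbb Z$ should produce sections, i.e.\ $\gamma$-automorphic functions on $W$. One then extends these across the end by a Hartogs/Andreotti-type argument, since a neighbourhood of the end is a punctured-ball-like region whose link is a homotopy $S^3$. This yields a germ of a normal surface at $o$; smoothness follows because the link is a homotopy sphere, by Mumford's theorem. With $\widetilde W$ in hand, $\gamma$ is a holomorphic map with an attracting fixed point $o$ whose basin is all of $\widetilde W$. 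By the Rosay--Rudin theorem, such a basin is biholomorphic to $\mathbb C^2$. Therefore $W\cong\mathbb C^2\setminus\{0\}$, and both $\widehat M$ and hence $M$ are Hopf surfaces.
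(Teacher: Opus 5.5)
\textbf{There is a genuine gap, in the step you yourself flag as the main obstacle.} Your reductions are fine: $b_1(M)=1$ via the K\"ahler criterion, and the primary cover $\widehat M$ with $\pi_1\cong\mathbb Z$ and $b_2=0$. So is the Wang-sequence/Milnor-duality argument that the universal cover $W$ is a homotopy $S^3$ with two ends. The endgame is also fine once the point $o$ exists: Mumford's theorem at $o$, the attracting fixed point, and the Rosay--Rudin (or Kodaira) normal form. The problem is that nothing in your outline actually produces $o$, and that is the whole analytic content of the theorem.

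\emph{Riemann--Roch gives no sections.} For every flat line bundle $L$ on $\widehat M$,
\[
\chi(\widehat M,L)=\chi(\mathcal O_{\widehat M})=\tfrac{1}{12}(c_1^2+c_2)=0,
\]
which is compatible with $h^0(L)=h^1(L)=h^2(L)=0$. To get a $\gamma$-automorphic function you need a separate argument that $h^0(L_t)\neq0$ for some non-trivial character $t$. That argument must use the hypothesis on $\pi_1$ in an essential analytic way. The Inoue surfaces $S_M$ have the same numerical data ($b_1=1$, $b_2=0$, $\chi(\mathcal O)=0$), yet there $h^0(L_t)=0$ for every non-trivial character. The reason is that holomorphic functions on $\mathbb H\times\mathbb C$ invariant under a lattice of translations are constant.

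\emph{The Hartogs/Andreotti extension has no hypothesis to run on.} That $W$ is a homotopy $S^3$ with two ends is purely topological and says nothing about Levi forms. Hartogs-type extension needs analytic input, e.g.\ a strictly pseudoconvex hypersurface cutting off the end from the correct side, or an ambient Stein space. You never show that either end is pseudoconcave. Moreover, in complex dimension two even strict pseudoconvexity would not suffice to fill the end by a point, because of Rossi's non-embeddable CR structures on $S^3$. So you would again need enough holomorphic functions near the end, which is the first problem. As written, the proposal assumes the analytic core rather than proving it.

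For comparison, the paper does not prove this statement; it is quoted from Kodaira \cite[Theorem 41]{KodairaIII}. The paper does run an argument of your shape in the static case and in the alternative proof of Section~\ref{sec:vaisman}. There the missing input comes from the soliton geometry. The associated Vaisman metric makes the universal cover a K\"ahler cone, and the cone structure supplies both the pseudoconvexity of the end and the holomorphic functions, via van Coevering's theorem that $\widetilde M\cup\{o\}$ is a normal affine variety. Only then is Mumford's theorem applied, exactly as at the end of your outline. For an arbitrary complex structure with $b_2=0$ and virtually cyclic $\pi_1$ no such metric is available, and supplying a substitute is precisely what your proposal leaves open.
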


Following \cite{KodairaII,HL83}, we use the following standard terminology.

\begin{definition}
A Hopf surface is called \emph{primary} if
\(\pi_1(M)\cong \mathbb Z\), and \emph{secondary} otherwise.
A primary Hopf surface is said to be of \emph{class \(1\)} if
\(\lambda=0\), and of \emph{class \(0\)} otherwise. A \emph{class \(1\)} Hopf surface is called diagonal, if $\vert\alpha\vert=\vert\beta\vert$.
\end{definition}
Note that the distinction between Hopf surfaces of class \(0\) and class \(1\)
is invariant under finite covers. Thus, we may talk about secondary Hopf
surfaces of class \(0\) or class \(1\).

Finally, we mention the explicit computation of automorphism groups of primary Hopf surfaces. The linear case is handled in \cite{namba74} while the non-linear case is completed in \cite{wehler81}. The following table is taken from \cite{wehler81}.
\begin{theorem}[\cite{namba74,wehler81}]
Let $X=(\mathbb C^2\setminus\{(0,0)\})/\langle\gamma\rangle$
be a primary Hopf surface. Then their automorphism groups are summarized
in the following table.

\par\medskip
\begingroup
\centering
\footnotesize
\renewcommand{\arraystretch}{1.6}
\setlength{\tabcolsep}{3pt}

\begin{tabularx}{\linewidth}{
    @{}
    c
    >{\centering\arraybackslash}X
    >{\centering\arraybackslash}X
    @{}
}
\toprule
Type
&
Form of the contraction map
&
$\operatorname{Aut}(X)$
\\
\midrule

$\mathrm{IV}$
&
\(
\begin{gathered}
\gamma(z_1,z_2)
  =(\alpha z_1,\alpha z_2),\\
0<|\alpha|<1
\end{gathered}
\)
&
\(
\mathrm{GL}(2,\mathbb C)/\langle\gamma\rangle
\)
\\
\addlinespace

$\mathrm{III}$
&
\(
\begin{gathered}
\gamma(z_1,z_2)
  =(\beta^m z_1,\beta z_2),\\
m\geq 2,\ 0<|\beta|<1
\end{gathered}
\)
&
\(
\displaystyle
\left\{
\begin{gathered}
(z_1,z_2)\longmapsto
  (a z_1+bz_2^m,dz_2),\\
a,d\in\mathbb C^*,\quad b\in\mathbb C
\end{gathered}
\right\}
\big/
\langle\gamma\rangle
\)
\\
\addlinespace

$\mathrm{II}_a$
&
\(
\begin{gathered}
\gamma(z_1,z_2)
  =(\beta^m z_1+\lambda z_2^m,\beta z_2),\\
m\geq 2,\
\lambda\neq0,\
0<|\beta|<1
\end{gathered}
\)
&
\(
\displaystyle
\left\{
\begin{gathered}
(z_1,z_2)\longmapsto
  (a^m z_1+bz_2^m,az_2),\\
a\in\mathbb C^*,\quad b\in\mathbb C
\end{gathered}
\right\}
\big/
\langle\gamma\rangle
\)
\\
\addlinespace

$\mathrm{II}_b$
&
\(
\begin{gathered}
\gamma(z_1,z_2)
  =(\alpha z_1+\lambda z_2,\alpha z_2),\\
\lambda\neq0,\ 0<|\alpha|<1
\end{gathered}
\)
&
\(
\displaystyle
\left\{
\begin{gathered}
(z_1,z_2)\longmapsto
  (a z_1+bz_2,az_2),\\
a\in\mathbb C^*,\quad b\in\mathbb C
\end{gathered}
\right\}
\big/
\langle\gamma\rangle
\)
\\
\addlinespace

$\mathrm{II}_c$
&
\(
\begin{gathered}
\gamma(z_1,z_2)
  =(\alpha z_1,\beta z_2),\ 0<|\alpha|\leq |\beta|<1,\\
\alpha\neq\beta^m,\  \textit{for every }m\geq2
\end{gathered}
\)
&
\(
\displaystyle
\left\{
\begin{gathered}
(z_1,z_2)\longmapsto(az_1,dz_2),\\
a,d\in\mathbb C^*
\end{gathered}
\right\}
\big/
\langle\gamma\rangle
\)
\\

\bottomrule
\end{tabularx}

\captionof{table}{Automorphism groups of primary Hopf surfaces.}
\label{tab:hopf-automorphism-groups}

\par
\endgroup
\medskip

\end{theorem}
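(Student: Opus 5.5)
The statement to be proved is the table of automorphism groups of primary Hopf surfaces, due to Namba and Wehler. My plan is to lift everything to the universal cover. Let $W=\mathbb C^2\setminus\{(0,0)\}$ and $X=W/\langle\gamma\rangle$. Every automorphism $\phi$ of $X$ lifts to a biholomorphism $\tilde\phi$ of $W$. Since $\pi_1(X)\cong\mathbb Z$ is generated by $\gamma$, the lift normalizes $\langle\gamma\rangle$, so $\tilde\phi\gamma\tilde\phi^{-1}=\gamma^{\pm1}$. Because $\gamma$ is a contraction (its orbits accumulate at the origin), conjugation must preserve this, so $\tilde\phi\gamma=\gamma\tilde\phi$. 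By Hartogs, $\tilde\phi$ extends to a holomorphic map of $\mathbb C^2$ fixing $0$. Conversely, any biholomorphism of $W$ commuting with $\gamma$ descends to $X$. Hence
\[
\Aut(X)\cong C(\gamma)/\langle\gamma\rangle,
\]
where $C(\gamma)$ is the centralizer of $\gamma$ in the group of biholomorphisms of $\mathbb C^2$ fixing the origin. The problem is therefore to compute this centralizer for each normal form in Theorem \ref{thm--Kodaira's classification on Hopf surfaces}.

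To do this, I expand $\tilde\phi=(\phi_1,\phi_2)$ in power series and compare coefficients in the identity $\tilde\phi\circ\gamma=\gamma\circ\tilde\phi$. In the diagonal case $\gamma=(\alpha z_1,\beta z_2)$, the monomial $z_1^pz_2^q$ in $\phi_1$ picks up the factor $\alpha^p\beta^q$ on one side and $\alpha$ on the other. So it survives only if $\alpha^p\beta^q=\alpha$, and similarly for $\phi_2$ with $\beta$.
\begin{itemize}
\item Since $0<|\alpha|\le|\beta|<1$, for $p+q\ge2$ this resonance can only hold when $\alpha=\beta^m$, with monomial $z_2^m$ in $\phi_1$, or when $\alpha=\beta$, where the linear part is unrestricted.
\item Case $\alpha=\beta$ (type IV): only linear terms survive, and all of $\mathrm{GL}(2,\mathbb C)$ commutes with $\gamma$.
\item Case $\alpha=\beta^m$ with $m\ge2$ (type III): the surviving terms give $(az_1+bz_2^m,dz_2)$.
\item Generic case (type II$_c$): only diagonal linear maps survive.
\end{itemize}
Invertibility then forces $a,d\neq0$.

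For the non-diagonal types II$_a$ and II$_b$, I write $\gamma=D+N$, where $D$ is the diagonal part and $N$ is the term $\lambda z_2^m$ (with $m=1$ for II$_b$). I first apply the same homogeneity argument. Comparing terms by weighted degree, with weight $1$ on $z_2$ and $m$ on $z_1$ so that $\gamma$ is weighted-homogeneous, shows that $\tilde\phi$ has the same triangular form as in type III, namely $(az_1+bz_2^m,dz_2)$. Substituting this into the commutation relation and comparing the $z_2^m$ coefficient of the first component gives
\[
a\lambda=\lambda d^m .
\]
Since $\lambda\ne0$, this yields $a=d^m$, which is the table's form for II$_a$ (and II$_b$ with $m=1$).

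The main obstacle is making the coefficient comparison rigorous in the non-diagonal cases, where $\gamma$ mixes monomials. I plan to handle this by induction on weighted degree: at each degree, the commutation relation becomes a linear equation $L(c)=0$ on the coefficients $c$ of that degree, with $L$ triangular. Its diagonal entries are $\alpha^p\beta^q-\alpha$ or $\alpha^p\beta^q-\beta$, and these are nonzero except at the resonances identified above. So every non-resonant coefficient vanishes, and only the listed terms remain.
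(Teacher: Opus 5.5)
The paper does not prove this table; it quotes it from Namba and Wehler. Your argument is an independent, self-contained derivation, and it is the standard one. You lift to $W=\C^2\setminus\{(0,0)\}$, show that the lift commutes with $\gamma$ rather than conjugating it to $\gamma^{-1}$, extend by Hartogs, and compute the centralizer of $\gamma$ among biholomorphisms of $\C^2$ fixing $0$ by a resonance analysis. This is the same Hartogs-plus-homogeneous-expansion technique the paper uses in Case III of the proof of Theorem~\ref{thm--uniqueness of soliton metric} and in the centralizer identity of Case II. What it buys over the citation is that Lemma~\ref{lem--compact rank of automorphism of Hopf surfaces} becomes self-contained.

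Two points tighten the argument:
\begin{itemize}
\item The exclusion of $\gamma^{-1}$ is cleanest by differentiating $\tilde\phi\gamma\tilde\phi^{-1}=\gamma^{-1}$ at $0$. The eigenvalues of $d\gamma_0$ have modulus $<1$ and those of $d\gamma_0^{-1}$ have modulus $>1$, so they cannot be conjugate. This needs Hartogs applied to $\tilde\phi^{-1}$ as well, so that the extension is a biholomorphism fixing $0$.
\item Your diagonal analysis correctly shows that the II$_c$ answer requires $\alpha\neq\beta$. The row as printed only excludes $\alpha=\beta^m$ for $m\geq 2$, so it literally overlaps type IV.
\end{itemize}

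One step does not follow as written, in type II$_b$. There $m=1$ and $\beta=\alpha$, so the weighted degree is the ordinary degree and the diagonal part of $\gamma$ is $\alpha I$. Every linear monomial is then resonant in \emph{both} components, including the $z_1$-term of $\tilde\phi_2$. Your triangularity argument therefore only yields $\tilde\phi=(az_1+bz_2,\,cz_1+dz_2)$, not the type III shape, so you cannot yet substitute $(az_1+bz_2^m,dz_2)$.

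The repair is one line. Comparing $z_1$-coefficients in the first component of $\tilde\phi\circ\gamma=\gamma\circ\tilde\phi$ (or $z_2$-coefficients in the second) gives $\lambda c=0$. Comparing $z_2$-coefficients in the first component gives $a=d$. Equivalently, a matrix commuting with $\alpha I+\lambda E_{12}$ commutes with $E_{12}$, hence is upper triangular with equal diagonal entries. For II$_a$ ($m\geq 2$) the problem does not arise, since $z_2$ is the only monomial of weighted degree $1$.

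Relatedly, the degree-by-degree system is not a homogeneous equation $L(c)=0$, because of the nonlinear term $\lambda\tilde\phi_2^m$. It is cleaner to proceed in order:
\begin{itemize}
\item Note that on weighted degree $k$ the operator $f\mapsto f\circ\gamma$ is triangular (ordering monomials by the power of $z_1$), with every diagonal entry equal to $\beta^k$.
\item Solve the decoupled equation $\tilde\phi_2\circ\gamma=\beta\tilde\phi_2$ first; this kills everything off weighted degree $1$.
\item Then solve $\tilde\phi_1\circ\gamma-\beta^m\tilde\phi_1=\lambda d^m z_2^m$. Its right-hand side lives only in weighted degree $m$, so every other weighted-degree part of $\tilde\phi_1$ vanishes, and the degree-$m$ part gives $a=d^m$.
\end{itemize}
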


The classical Cartan-Malcev-Iwasawa theorem says that any finite dimensional connected real Lie group $G$ contains a maximal compact subgroup and all maximal compact subgroups are conjugate to each other; cf. \cite[Theorem 1.2 in Chapter VII]{borel98}. Then by standard theory for compact Lie groups \cite{duistermaat-kolk}, we know that all maximal tori of $G$ are conjugate. In particular, they have the same rank, which is usually called the compact rank and is denoted by $\mathrm{rank}_c(G)$. The following lemma should be a standard result. Since we cannot find an explicit reference, we include the computation using Table~\ref{tab:hopf-automorphism-groups}.
\begin{lemma}\label{lem--compact rank of automorphism of Hopf surfaces}
   Let \((M,J)\) be a primary Hopf surface, and let \(G\) denote the identity component of its automorphism group. Then 
    \begin{equation}
        \mathrm{rank}_c(G)=\begin{cases}
3, & \lambda=0,\\
2, & \lambda\neq0.
\end{cases}
    \end{equation}
\end{lemma}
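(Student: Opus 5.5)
We compute the compact rank case by case from Table~\ref{tab:hopf-automorphism-groups}. In each case $\Aut(X)$ has the form $\widetilde G/\langle\gamma\rangle$. Here $\widetilde G$ is a connected complex Lie group of polynomial automorphisms of $\C^2\setminus\{0\}$ that commute with $\gamma$, and $\gamma\in\widetilde G$ generates a discrete central subgroup. So $G=\widetilde G/\langle\gamma\rangle$. The plan is to find a maximal compact torus of $\widetilde G$ and then see how taking the quotient by $\langle\gamma\rangle$ changes the compact rank.

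\textbf{Effect of the quotient.} The subgroup $\langle\gamma\rangle\cong\mathbb Z$ is discrete, central and torsion-free. Moreover $\gamma$ is a contraction ($|\alpha|,|\beta|<1$), so it lies in no compact subgroup of $\widetilde G$. First write $\widetilde G\cong K\times V$ up to a semidirect product with a unipotent part, where $V$ is a vector group. Then $\widetilde G/\langle\gamma\rangle$ has compact rank $\mathrm{rank}_c(\widetilde G)+1$: the image of the one-parameter subgroup generated by $\log$ of the semisimple part of $\gamma$ closes up to a circle, while the compact torus of $\widetilde G$ injects into the quotient because it meets $\langle\gamma\rangle$ trivially. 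Concretely, for the diagonal part $(\C^*)^r\cong (S^1)^r\times\mathbb R^r$, the quotient by a lattice element with nonzero $\mathbb R^r$-component yields $(S^1)^{r}\times S^1\times\mathbb R^{r-1}$, which has compact rank $r+1$. Unipotent parts (the $b$-terms) contribute no compact directions. Thus $\mathrm{rank}_c(G)=\mathrm{rank}_c$ of the reductive (semisimple-element) part of $\widetilde G$, plus $1$.

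\textbf{Case check.}
\begin{itemize}
\item Type IV: $\widetilde G=\mathrm{GL}(2,\C)$, with maximal compact subgroup $U(2)$ of rank $2$. Since $\gamma=\alpha\,\mathrm{Id}$ is central, the quotient $\mathrm{GL}(2,\C)/\langle\gamma\rangle\simeq (U(2)\times\mathbb R_{>0})/\mathbb Z$ contains $U(2)\times S^1$ up to finite cover, giving rank $3$.
\item Types III and II$_c$: the reductive part is $(\C^*)^2$ (the parameters $a,d$), so the rank is $2+1=3$.
\item Types II$_a$ and II$_b$ (where $\lambda\ne0$): the reductive part is $\C^*$ (the single parameter $a$), since the $b$-directions are unipotent. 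The rank is therefore $1+1=2$.
\end{itemize}
In each case we confirm that $\gamma$ actually lies in the identity component. For II$_a$, take $a=\beta$, $b=\lambda$. Hence $\lambda=0$ (Types IV, III, II$_c$) gives rank $3$, and $\lambda\neq0$ gives rank $2$.

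\textbf{Main obstacle.} The only delicate point is the upper bound, namely that no larger compact torus exists after the quotient. The approach is to use the surjection $\widetilde G\to G$ with kernel $\mathbb Z$. For any torus $T\subset G$, its preimage is an abelian Lie group $\widetilde T$ with $\widetilde T/\mathbb Z$ compact, so $\widetilde T$ is an extension of $\mathbb Z$ by a compact group, up to vector factors, and $\dim T\le \mathrm{rank}_c(\widetilde G)+1$. To see this, note that the identity component of $\widetilde T$ is a connected abelian group, hence of the form $(S^1)^p\times\mathbb R^q$. Compactness of the quotient by $\mathbb Z$ forces $q\le1$, and $(S^1)^p$ sits in a maximal compact torus of $\widetilde G$, so $p\le \mathrm{rank}_c(\widetilde G)$. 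Finally, computing $\mathrm{rank}_c(\widetilde G)$ reduces to linear algebra: each $\widetilde G$ is a semidirect product of a unipotent normal subgroup with the displayed torus $(\C^*)^r$, or with $\mathrm{GL}_2$ in Type IV.
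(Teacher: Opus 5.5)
Your route differs from the paper's and is sound overall, but one step of your lower bound is false as written. The paper works type by type:
\begin{itemize}
\item In the abelian cases ($\mathrm{II}_c$, $\mathrm{II}_a$, $\mathrm{II}_b$) it writes $G\cong\C^2/\Lambda$ via the exponential map and reads off $\mathbb T^3\times\mathbb R$ or $\mathbb T^2\times\mathbb R^2$ from the real rank of the lattice.
\item In type $\mathrm{III}$ it quotients by the unipotent subgroup $U\cong\C$ and reduces to the $\mathrm{II}_c$ computation.
\item In type $\mathrm{IV}$ it bounds the rank via $1\to\C^*/\langle\alpha\rangle\to G\to\mathrm{PGL}(2,\C)\to1$.
\end{itemize}
You instead isolate one structural fact: passing from $\widetilde G$ to $\widetilde G/\langle\gamma\rangle$, with $\gamma$ central and in no compact subgroup, raises the compact rank by exactly one. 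You then only need $\mathrm{rank}_c(\widetilde G)=2,2,2,1$ for $\mathrm{GL}(2,\C)$, $\C\rtimes(\C^*)^2$, $(\C^*)^2$ and $\C^*\times\C$. Your upper bound is correct and uniform: lift a torus, note the identity component of its preimage is $(S^1)^p\times\mathbb R^q$ with $q\le1$, and use $p\le\mathrm{rank}_c(\widetilde G)$. The cost is that you do not identify $G$ explicitly. You should add why the preimage is abelian: on the identity component the commutator map is continuous with values in the discrete kernel, and $\gamma$ is central.

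The false step is your claim that the one-parameter subgroup generated by $\log$ of the \emph{semisimple part} of $\gamma$ closes up to a circle in $G$. In class $1$ this holds, since $\gamma$ is diagonal and equals its semisimple part. In types $\mathrm{II}_a$ and $\mathrm{II}_b$ it fails. Write $H\cong\C^*\times\C$ via $\Phi(a,s)(z_1,z_2)=\bigl(a^m(z_1+sz_2^m),az_2\bigr)$, which satisfies $\Phi(a,s)\Phi(c,t)=\Phi(ac,s+t)$. Then $\gamma=\Phi(\beta,\lambda/\beta^m)$ and its semisimple part is $\Phi(\beta,0)$. For a logarithm $\ell_\beta$ of $\beta$, the equation $\Phi(e^{t\ell_\beta},0)=\gamma^n$ forces $n=0$, and then $t=0$ because $\Re\ell_\beta<0$. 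So this subgroup descends to an embedded line in $G$, not a circle, and your lower bound $\mathrm{rank}_c(G)\ge2$ in class $0$ is unsupported as stated.

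The repair is to use the one-parameter subgroup through $\gamma$ itself, $t\mapsto\Phi\bigl(e^{t\ell_\beta},t\lambda/\beta^m\bigr)$. It commutes with the circle $\{\Phi(e^{i\phi},0)\}$, and the map $(\phi,t)\mapsto[\Phi(e^{i\phi+t\ell_\beta},t\lambda/\beta^m)]$ has kernel $2\pi\mathbb Z\times\mathbb Z$, so its image is a $2$-torus. In general, your lower bound should be phrased as: a maximal torus of $\widetilde G$ together with a one-parameter subgroup through $\gamma$ that commutes with it. That subgroup must be exhibited in each case: inside the diagonal subgroup in class $1$, and inside the abelian group $H$ in class $0$.
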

\begin{proof}
We normalize the contraction map \(\gamma\) as above and proceed case by case.

\medskip
\noindent\emph{Class $1$ that is $\lambda=0$.}
There are three sub-cases.

\emph{Type $\mathrm{II}_c$.}
The table gives
\[
G
=
\frac{(\mathbb C^*)^2}
     {\langle(\alpha,\beta)\rangle},\quad 0<|\alpha|<|\beta|<1,\ \alpha\neq\beta^m
\ \text{for every }m\geq2.
\]
Choose \textit{some} logarithm 
\begin{equation}
    \ell_\alpha=\log\alpha,
\ \ell_\beta=\log\beta,
\end{equation}
then the universal covering homomorphism
\begin{equation}
    \mathbb C^2\longrightarrow G,
\qquad
(u,v)\longmapsto
\bigl[(e^u,e^v)\bigr]
\end{equation}
has kernel
\begin{equation}
   \Lambda_{\alpha,\beta}
=
\left\langle
(2\pi i,0),\,
(0,2\pi i),\,
(\ell_\alpha,\ell_\beta)
\right\rangle_{\mathbb Z}. 
\end{equation}
It is not hard to see these three vectors are $\mathbb R$-linearly independent. Consequently,
\begin{equation}
    G
\cong
\mathbb C^2/\Lambda_{\alpha,\beta}
\cong
\mathbb T^3\times\mathbb R
\end{equation}
as a real Lie group. Therefore $\mathrm{rank}_c(G)=3$.

\emph{Type $\mathrm{III}$.}
Suppose
\begin{equation}
    \gamma(z_1,z_2)
=
(\beta^m z_1,\beta z_2),
\ m\geq2,\ 0<|\beta|<1.
\end{equation}
Denote by
\begin{equation}
    H=\left\{
F_{a,b,d}(z_1,z_2)
=
(az_1+bz_2^m,dz_2):
a,d\in\mathbb C^*,\ b\in\mathbb C
\right\}.
\end{equation}
Then the table gives
\begin{equation}
    G=H/\langle\gamma\rangle.
\end{equation}

Consider the homomorphism
\begin{equation}
    \pi:H\longrightarrow(\mathbb C^*)^2,
\qquad
\pi(F_{a,b,d})=(a,d),
\end{equation}
whose kernel is
\begin{equation}
    U
=
\left\{
(z_1,z_2)\longmapsto
(z_1+bz_2^m,z_2):
b\in\mathbb C
\right\}
\cong(\mathbb C,+).
\end{equation}
Since $\pi(\gamma)=(\beta^m,\beta)$, the homomorphism $\pi$ then induces an exact sequence
\begin{equation}\label{equa--exact sequence of type III}
    1\longrightarrow U
\longrightarrow G
\longrightarrow
\frac{(\mathbb C^*)^2}
     {\langle(\beta^m,\beta)\rangle}
\longrightarrow1.
\end{equation}

The diagonal subgroup of $H$, obtained by setting $b=0$, descends to a subgroup of $G$ isomorphic to
   $ \frac{(\mathbb C^*)^2}
     {\langle(\beta^m,\beta)\rangle}.$
By the calculation in the type $\mathrm{II}_c$ case, this subgroup
contains a compact torus of dimension $3$. Hence $\mathrm{rank}_c(G)\geq 3$.
For the opposite direction, let $T\subset G$ be any compact torus.
The additive group $U\cong(\mathbb C,+)$ has no nontrivial compact
subgroup, so $T\cap U=\{\mathrm{Id}\}$. It follows that the projection of $T$ to $(\mathbb C^*)^2/
\langle(\beta^m,\beta)\rangle$ is injective. Since the latter group has compact rank $3$, we see that $\dim T\leq 3$. Thus $\mathrm{rank}_c(G)=3$.

\emph{Type $\mathrm{IV}$.}
Suppose
\begin{equation}
    \gamma(z_1,z_2)=(\alpha z_1,\alpha z_2),\ 0<|\alpha|<1,
\end{equation}
then
\begin{equation}
    G
=
\mathrm{GL}(2,\mathbb C)/\langle\alpha I\rangle.
\end{equation}
The diagonal subgroup gives an inclusion
\begin{equation}
    \frac{(\mathbb C^*)^2}
     {\langle(\alpha,\alpha)\rangle}
\subset G,
\end{equation}
which contains a $3$-torus using the same argument before. Hence $\mathrm{rank}_c(G)\geq 3$.
To obtain the upper bound, consider the exact sequence
\begin{equation}\label{eq-exact sequence for groups}
    1\longrightarrow
\frac{\mathbb C^*}{\langle\alpha\rangle}
\longrightarrow
\frac{\mathrm{GL}(2,\mathbb C)}
     {\langle\alpha I\rangle}
\longrightarrow
\mathrm{PGL}(2,\mathbb C)
\longrightarrow1,
\end{equation}
whose kernel is
\begin{equation}
    \frac{\mathbb C^*}{\langle\alpha\rangle}
\cong
\frac{\mathbb C}
{\langle2\pi i,\ell_\alpha\rangle_{\mathbb Z}}
\cong\mathbb T^2.
\end{equation}
Moreover, a maximal compact subgroup of
$\mathrm{PGL}(2,\mathbb C)$ is $\mathrm{PU}(2)\cong\mathrm{SO}(3)$, whose maximal tori are one-dimensional. Then using the exact sequence \eqref{eq-exact sequence for groups}, we obtain that $\mathrm{rank}_c(G)$ is at most 3.
Therefore $\mathrm{rank}_c(G)=3$.

\medskip
\noindent
\emph{Class $0$ that is $\lambda\neq 0$.}
Types $\mathrm{II}_a$ and $\mathrm{II}_b$ will be treated
simultaneously. Their contraction map has the form
\begin{equation}
    \gamma(z_1,z_2)
=(\beta^m z_1+\lambda z_2^m,\beta z_2),
\ \lambda\neq0,\ m\geq1.
\end{equation}
Here type $\mathrm{II}_a$ corresponds to $m\geq2$, while type
$\mathrm{II}_b$ corresponds to $m=1$.

By the table,
\begin{equation}
    G=H/\langle\gamma\rangle,
\end{equation}
where
\begin{equation}
    H=\left\{
F_{a,b}(z_1,z_2)
=
(a^m z_1+bz_2^m,az_2):
a\in\mathbb C^*,\ b\in\mathbb C
\right\}.
\end{equation}

Let $s=\frac{b}{a^m}$. We write $F_{a,b}$ as
\begin{equation}
    \Phi(a,s)(z_1,z_2)
=\bigl(a^m(z_1+s z_2^m),az_2\bigr).
\end{equation}
A direct calculation gives
\begin{equation}
    \Phi(a,s)\circ\Phi(c,t)=\Phi(ac,s+t),
\end{equation}
thus $H\cong\mathbb C^*\times\mathbb C$, where the first factor is the multiplicative group and the
second factor is the additive group.

Under this identification, the deck generator corresponds to
\begin{equation}
   \gamma
\longleftrightarrow
\left(
\beta,\frac{\lambda}{\beta^m}
\right),
\end{equation}
thus
\begin{equation}
    G
\cong
\frac{\mathbb C^*\times\mathbb C}
{\left\langle
\left(\beta,\lambda/\beta^m\right)
\right\rangle}.
\end{equation}
Choose a logarithm $\ell_\beta=\log\beta$. Similarly to previous argument, we consider the universal covering homomorphism
\begin{equation}
    \mathbb C^2\longrightarrow G,
\qquad
(u,v)\longmapsto[(e^u,v)]
\end{equation}
whose kernel is
\begin{equation}
    \Lambda_0
=
\left\langle
(2\pi i,0),\,
\left(
\ell_\beta,\frac{\lambda}{\beta^m}
\right)
\right\rangle_{\mathbb Z}.
\end{equation}
One can check that the two generators are $\mathbb R$-linearly independent. Therefore $\Lambda_0$
has real rank $2$, and
\begin{equation}
    G
\cong
\mathbb C^2/\Lambda_0
\cong
\mathbb T^2\times\mathbb R^2
\end{equation}
as a real Lie group. It follows that $\mathrm{rank}_c(G)=2$.
\end{proof}

\section{Proof of main theorems}

\subsection{Existence of three commuting Killing fields}
We observe first that any unknown non-static steady pluriclosed soliton metric admits three linearly independent commuting real holomorphic Killing fields.

We start with the following tensor algebra lemma:
\begin{lemma}[Four-dimensional three-form algebra]\label{lem--tensor algebra lemma}
\label{lem:four-dimensional-algebra}
Let $(V,g)$ be an oriented real four dimensional vector space endowed with a positive definite metric $g$. If $\theta$ is a one-form and
$H=-*\theta$, then for every vector $X\in V$,
\begin{equation}\label{eq:four-dimensional-algebra}
 \iota_XH=-*(\theta\wedge X^\flat),
 \qquad
 H^2=2\bigl(|\theta|^2g-\theta\otimes\theta\bigr).
\end{equation}
Thus $H^2\geq0$ and, wherever $H\ne0$,
\begin{equation}\label{eq-zero case}
 \ker H^2=\{X:\iota_XH=0\}=\mathbb R\theta^\sharp.
\end{equation}
\end{lemma}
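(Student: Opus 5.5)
Both identities are pointwise and linear-algebraic, so we work at a single point. By $O(4)$-invariance we choose an oriented orthonormal basis $e_1,\dots,e_4$ with $\theta=t\,e^1$, $t=|\theta|$. If $t=0$ everything is trivially zero, so assume $t>0$. Then $H=-*\theta=-t\,*e^1=-t\,e^{234}$, with the standard convention $*e^1=e^{234}$. A different orientation convention would only flip the sign of $H$, and this affects neither identity, since both sides of the first are linear in $\theta$ through $*$ and the second is quadratic.

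For the first identity, we use the general fact $\iota_X *\alpha=*(\alpha\wedge X^\flat)$ for a $k$-form $\alpha$. This follows from $\langle \iota_X*\alpha,\beta\rangle=\langle *\alpha, X^\flat\wedge\beta\rangle$ together with $*$ being an isometry and $\beta\wedge *\gamma=\langle\beta,\gamma\rangle\,\mathrm{vol}$. It can also be checked directly on basis vectors: both sides vanish for $X=e_1$; for $X=e_2$ we get $\iota_{e_2}e^{234}=e^{34}$ and $*(e^1\wedge e^2)=e^{34}$; and $e_3,e_4$ work similarly. Applying the fact with $\alpha=\theta$ and multiplying by $-1$ gives $\iota_XH=-*(\theta\wedge X^\flat)$.

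For the second identity, we compute $H^2(X,Y)=\sum_{p,q}H(X,e_p,e_q)H(Y,e_p,e_q)=2\langle\iota_XH,\iota_YH\rangle$. The factor $2$ comes from summing over ordered pairs, with the norm on $2$-forms taken so that $|e^{pq}|=1$. Using the first identity and the fact that $*$ is an isometry, this equals $2\langle\theta\wedge X^\flat,\theta\wedge Y^\flat\rangle$. The standard Gram determinant formula for wedges of $1$-forms then gives
\[
2\bigl(|\theta|^2\langle X,Y\rangle-\theta(X)\theta(Y)\bigr),
\]
which is the claimed formula. As a sanity check in the basis: $H=-te^{234}$, so $H^2(e_1,e_1)=0$ and $H^2(e_i,e_i)=2t^2$ for $i\ge2$. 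This matches $2(t^2 g-t^2 e^1\otimes e^1)$.

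The remaining claims follow directly. $H^2(X,X)=2\bigl(|\theta|^2|X|^2-\theta(X)^2\bigr)\ge0$ by Cauchy--Schwarz. Equality holds exactly when $X\parallel\theta^\sharp$, provided $\theta\neq0$, i.e.\ $H\neq0$. Since $H^2$ is positive semidefinite, its kernel is its null cone, namely $\mathbb R\theta^\sharp$. Moreover, $H^2(X,X)=2|\iota_XH|^2$, so $\iota_XH=0$ iff $X\in\ker H^2$. The only delicate point in the whole argument is keeping the normalization of the norm on $2$-forms consistent with the factor $2$, and the explicit basis check settles this.
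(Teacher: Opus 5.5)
Your proposal is correct and follows essentially the same route as the paper: choose an oriented orthonormal coframe with $\theta=|\theta|e^1$, compute $\iota_XH$ explicitly, then combine $H^2(X,X)=2|\iota_XH|^2$ with the fact that $*$ is an isometry and the Gram identity $|\theta\wedge X^\flat|^2=|\theta|^2|X|^2-\theta(X)^2$. Your coordinate-free justification of $\iota_X*\alpha=*(\alpha\wedge X^\flat)$ and your polarized bilinear version of $H^2$ are harmless extras.
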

\begin{proof}
    In dimension four, the  Hodge star operator satisfies
\begin{equation}
     *^2=(-1)^{k(4-k)}=-1\quad(k=1,3).
\end{equation}

Without loss of generality, suppose $\theta\ne0$ and
choose an oriented orthonormal coframe $(e^1,e^2,e^3,e^4)$ with
$\theta=|\theta|e^1$.  Writing $X=\sum_i x_i e_i$ gives
\[
 \begin{split}
 \iota_XH
 &=-|\theta|(x_2e^{34}-x_3e^{24}+x_4e^{23})\\
 &=-*\bigl(|\theta|(x_2e^{12}+x_3e^{13}+x_4e^{14})\bigr)
 =-*(\theta\wedge X^\flat).
 \end{split}
\]
 Since $*$ is an isometry,
\[
 |\iota_XH|^2=|\theta\wedge X^\flat|^2
 =|\theta|^2|X|^2-\theta(X)^2.
\]
Using the definition of $H^2$, we can compute that 
\begin{equation}
    H^2(X,X)=2\vert\iota_XH\vert^2,
\end{equation}
thus concluding the identity \eqref{eq:four-dimensional-algebra} and \eqref{eq-zero case}.
\end{proof}

In particular, the semi-positive definiteness of $H^2$ implies $\Ric_f\geq0$ for a steady pluriclosed soliton. Moreover, by soliton equation we must have
\begin{equation}\label{equa--kernel of ricci is equal to kernel of H^2}
    \ker(\Ric_{f,p})=\ker (H^2_p)
\end{equation}
for every $p\in M$. 
As a result, we obtain:
\begin{corollary}\label{cor--rough complex structure determination}
    Let $(M,J,g,f)$ be a compact steady soliton surface. Then either $H\equiv 0$, where $(M,J,g)$ is a K\"ahler Ricci-flat surface and $f$ is a constant, or $H\not\equiv 0$, where $(M,J)$ is a  Hopf surface.
\end{corollary}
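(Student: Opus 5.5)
We split according to whether $H$ vanishes identically, using $\Ric_f=\frac14H^2\geq 0$ (Lemma~\ref{lem--tensor algebra lemma}) and the structure result Theorem~\ref{thm--structure of compact manifold with ricf geq 0}.

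\emph{Case $H\equiv 0$.} Then $d^c\omega=0$. In complex dimension two, $H=-*\theta$, so $\theta=0$ and $d\omega=\theta\wedge\omega=0$. Hence $g$ is K\"ahler and $\Ric_g+\nabla^2f=0$. Taking the trace and integrating $\Delta f=-R$ is not enough by itself, so we argue as follows. On a K\"ahler manifold the Ricci form is $(1,1)$, so the $J$-anti-invariant part of $\nabla^2 f$ vanishes. Thus $\nabla f$ is real holomorphic, and $\Ric$ represents $2\pi c_1$ with $\rho=-\ii\partial\pp f$, which is exact. The standard argument for compact steady gradient Ricci solitons then applies: combining the identity $R+|\nabla f|^2=\mathrm{const}$ with the equation $\Delta f-|\nabla f|^2=-\mathrm{const}$ and evaluating at the maximum and minimum of $f$ shows the constant is zero and $f$ is harmonic. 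On a compact manifold this forces $f$ to be constant, hence $\Ric=0$. Alternatively one can simply integrate $\Delta_f f=\Delta f-|\nabla f|^2$ against $e^{-f}$.

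\emph{Case $H\not\equiv0$.} We aim to show $b_2(M)=0$ and that $\pi_1(M)$ is virtually $\mathbb Z$, and then apply Kodaira's characterization. First, $(M,J)$ is not K\"ahler. Indeed, if it admitted a K\"ahler metric, the pluriclosed metric $g$ would have $\partial\omega$ $\pp$-closed. A standard argument (Gauduchon gauge, or pairing $\ii\partial\pp\omega=0$ with a K\"ahler form) shows that $\partial\omega=\pp\beta$ for some $\beta$, so $H$ would be $d$-exact. Now the soliton equation $d^*H+\iota_{\nabla f}H=0$ says $\delta_fH=0$, and $H$ is closed, so $H$ is $\Delta_f$-harmonic. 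Exactness then gives $\int|H|^2e^{-f}=\int\langle d\beta,H\rangle e^{-f}=0$, a contradiction. Therefore $b_1(M)$ is odd, and in particular $b_1\geq1$.

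By Theorem~\ref{thm--structure of compact manifold with ricf geq 0}, a finite cover splits as $N\times\mathbb T^k$ with $k\geq1$ and $N$ compact and simply connected. The flat directions lie in $\ker\Ric_f$, which by \eqref{equa--kernel of ricci is equal to kernel of H^2} equals $\mathbb R\theta^\sharp$ wherever $H\neq0$, a one-dimensional space. Since parallel fields have constant length and the set $\{H\neq0\}$ is open and nonempty, we get $k\le1$, so $k=1$. Then $\pi_1$ of the cover is $\pi_1(N)\times\mathbb Z=\mathbb Z$, so $\pi_1(M)$ is virtually cyclic.

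For $b_2$, note that $N$ is a simply connected compact $3$-manifold, hence $S^3$ by Perelman; this avoids Bogomolov's theorem. Thus the cover is $S^3\times S^1$, which has $b_2=0$. Since $b_2$ does not increase under finite quotients (by the transfer map), $b_2(M)=0$. Kodaira's theorem then shows that $(M,J)$ is a Hopf surface.

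The main obstacle is justifying that $k=1$ exactly. We must rule out the possibility that $\ker\Ric_f$ is larger on the zero set of $H$. This is handled by the fact that parallel vector fields are determined globally by their value on the open set where $H\neq0$. We also need the exactness argument for non-K\"ahlerness; if it fails, we would instead use the parity of $b_1$ directly via a parallel $1$-form dual to a multiple of $\theta$.
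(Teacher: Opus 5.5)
Your argument is correct, and its skeleton is the paper's: $\Ric_f=\frac14H^2\ge0$, the splitting of a finite cover as $N\times\mathbb T^k$, $k=1$ because the flat directions lie in the one-dimensional kernel of $H^2$ at a point where $H\neq0$, hence $\pi_1(M)$ virtually cyclic, then Kodaira's characterization. The differences are in sub-steps.

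To exclude the K\"ahler case, the paper first gets $b_1\le1$ from the structure theorem and, if $b_1=0$, cites Streets' Proposition~3.5. You instead give a self-contained argument: on a K\"ahler surface $H$ is $d$-exact, and since $H$ is closed with $\delta_fH=0$ (the second soliton equation), $\int_M|H|^2e^{-f}d\mu_g=0$. This is a nice replacement that avoids Streets' result. However, the reason you give for exactness (``Gauduchon gauge, or pairing with a K\"ahler form'') is not the right one. The clean justification is the $\partial\bar\partial$-lemma applied to $\partial\omega$, which is $\partial$-exact and $d$-closed because $\partial\pp\omega=0$. This gives $\partial\omega=\partial\pp\xi$ and hence $H=\ii\partial\omega-\ii\pp\omega=d\bigl(\ii\pp\xi-\ii\partial\bar\xi\bigr)$.

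For $b_2(M)=0$, the paper uses $b_1=1$ and the nowhere vanishing parallel field, so $b_2(M)=\chi(M)=0$ by Poincar\'e--Hopf. This is more elementary than your route, and it also avoids Bogomolov's theorem. If you keep your route, Perelman is unnecessary: a closed simply connected $3$-manifold has $b_1(N)=b_2(N)=0$ by Hurewicz and Poincar\'e duality, so K\"unneth gives $b_2(N\times S^1)=0$ and the transfer finishes.

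Finally, in the case $H\equiv0$, once the constant in $R+|\nabla f|^2=C$ is shown to vanish you get $\Delta f=|\nabla f|^2$, not $\Delta f=0$. Integrating still yields $\nabla f\equiv0$, so this is only a wording slip. Your Hamilton-identity argument is an equally standard alternative to the paper's use of $\Delta_fR=2|\Ric|^2$.
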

\begin{proof}
    If $H\equiv 0$, then $g$ is K\"ahler, and the soliton equation becomes 
    \begin{equation}
        \Ric+\nabla^2 f=0.
    \end{equation}
    It's well-known that a compact steady gradient Ricci soliton is Ricci-flat. In fact, to see this we use
    \begin{equation}
        \Delta_f R=2\vert\Ric\vert^2,
    \end{equation}
    so integration against weighted volume $e^{-f}d\mu_g$ gives
    \begin{equation}
        \int\vert\Ric\vert^2 e^{-f}d\mu_g=0.
    \end{equation}

    Suppose otherwise $H\neq 0$ at $p\in M$. By Theorem \ref{thm--structure of compact manifold with ricf geq 0} and Lemma \ref{lem--tensor algebra lemma} applied at $p$, we must have $b_1(M)\leq 1$. If $b_1(M)=0$, then $(M,J)$ is K\"ahler \cite[Chapter IV, Theorem 3.1]{barth2003}, so by \cite[Proposition 3.5]{Streets19} the soliton metric must be a K\"ahler Ricci-flat metric with $f\equiv\mathrm{const}$, contradicting to our assumption $H\neq 0$. Thus $b_1(M)=1$, and by Theorem \ref{thm--structure of compact manifold with ricf geq 0}, there exists a non-zero parallel Killing $1$-form $\eta$ on $M$. The existence of nowhere vanishing vector field on $M$ implies that $\chi(M)=0$ by Poincar\'e--Hopf theorem. Therefore by Poincar\'e duality, we see $b_2(M)=0$.

    Finally we determine the complex structure of $(M,J)$. By \cite[Theorem 6.6]{WW09}, there exists a finite cover $\widehat M\to M$, such that $\widehat M$ admits a Riemannian splitting $N\times T^k$, where $N$ is compact and simply-connected and $T^k$ is the flat torus. Moreover, $\hat f$ is constant along flat factors. Since $b_1(M)=1$, we must have $k\geq 1$.
    
    We claim that $k=1$. In fact, let $E$ be any parallel vector field tangential to $T^k$-factor. Since the metric splits and $\hat f$ is constant along $T^k$-factor, by soliton equation we have $\iota_E\tilde H=0$ on $\tilde M$. Lemma \ref{lem--tensor algebra lemma} then implies that $k=1$. Thus
    \begin{equation}
        \pi_1(\widehat M)\cong\pi_1(S^1)=\mathbb Z,
    \end{equation}
    since $N$ is simply-connected. The covering map $\widehat M\to M$ is finite, so $\pi_1(M)$ is virtually cyclic, i.e. it contains a finite index subgroup isometric to $\mathbb Z$. Since $b_2(M)=0$, by Kodaira's characterization \cite[Theorem 41]{KodairaIII}, $(M,J)$ must be a Hopf surface.
\end{proof}

Let $K=\eta^{\#}$ be the nowhere vanishing parallel vector field constructed via harmonic $1$-form in Corollary \ref{cor--rough complex structure determination}. Recall that it satisfies $\nabla^g K=0$ and $\iota_K H=0$. Then we have:
\begin{proposition}\label{prop--real holomorphic Killing fields from harmonic 1-form}
    Both $K$ and $JK$ are real holomorphic Killing fields and commute with each other.
\end{proposition}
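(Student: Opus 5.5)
The plan is to work with the Bismut connection $\nabla^B=\nabla^g-\frac12 g^{-1}H$, where $H=-d^c\omega$. Two standard facts are used. First, $\nabla^B$ is metric and satisfies $\nabla^B J=0$. Second, its torsion $T^B$ is totally skew and is given by $H$ up to sign, so $g(T^B(X,Y),Z)=\pm H(X,Y,Z)$.

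\textbf{Step 1: $\nabla^B K=0$, $\nabla^B(JK)=0$, and both fields are Killing.} For any $X$,
\begin{equation*}
\nabla^B_X K=\nabla^g_X K-\tfrac12 g^{-1}H(X,K,\cdot).
\end{equation*}
Both terms vanish because $\nabla^g K=0$ and $\iota_K H=0$. Since $\nabla^B J=0$, it follows that $\nabla^B(JK)=0$ as well. Then $K$ is Killing because it is Levi-Civita parallel. For $JK$ we get
\begin{equation*}
\nabla^g(JK)=\tfrac12 g^{-1}H(\cdot,JK,\cdot),
\end{equation*}
which is a skew endomorphism, so $JK$ is also Killing.

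\textbf{Step 2: $K$ is real holomorphic.} For a vector field $X$ with $\nabla^B X=0$ we have
\begin{equation*}
\mathcal L_X Y=[X,Y]=\nabla^B_X Y-T^B(X,Y).
\end{equation*}
This gives
\begin{equation*}
(\mathcal L_X J)Y=(\nabla^B_X J)Y-T^B(X,JY)+JT^B(X,Y)=-A_X JY+JA_X Y,
\end{equation*}
where $A_X:=T^B(X,\cdot)$. For $X=K$ we have $A_K=0$ because $\iota_K H=0$, so $\mathcal L_K J=0$.

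\textbf{Step 3: $JK$ is real holomorphic.} This is the main point. By the formula in Step 2 it suffices to show that $A_{JK}$ commutes with $J$. Equivalently, the $2$-form $\iota_{JK}H$ must be $J$-invariant, i.e. of type $(1,1)$.

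\emph{Case $H_p=0$.} Then $\iota_{JK}H$ vanishes at $p$ and there is nothing to check.

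\emph{Case $H_p\neq0$.} Here $\iota_K H=0$, so Lemma~\ref{lem--tensor algebra lemma}, specifically \eqref{eq-zero case}, gives $\theta_p=c\,K^\flat_p$ for some $c\in\mathbb R$. The same lemma then yields
\begin{equation*}
\iota_{JK}H=-*(\theta\wedge (JK)^\flat)=-c\,*(K^\flat\wedge (JK)^\flat).
\end{equation*}
The form $K^\flat\wedge(JK)^\flat$ is $J$-invariant, since it is a multiple of the Kähler form of the complex line spanned by $K$ and $JK$. In real dimension four the Hodge star preserves $J$-invariant $2$-forms: $\Lambda^{1,1}_{\mathbb R}=\mathbb R\omega\oplus\Lambda^-$, and $*$ acts as $+1$ on $\omega$ and as $-1$ on $\Lambda^-$. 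Hence $\iota_{JK}H$ is of type $(1,1)$, $A_{JK}$ commutes with $J$, and $\mathcal L_{JK}J=0$.

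This pointwise type argument is the step I expect to need the most care. In particular the sign and normalization conventions relating $T^B$ and $H$ must be checked, though they do not affect the commutation property. One must also confirm that the argument is purely pointwise, so no smoothness of $c$ is needed.

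\textbf{Step 4: $K$ and $JK$ commute.} Using $\mathcal L_K J=0$ from Step 2,
\begin{equation*}
[K,JK]=\mathcal L_K(JK)=(\mathcal L_K J)K+J[K,K]=0.
\end{equation*}
This completes the proof of the proposition.
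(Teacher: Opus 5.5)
Your proof is correct. Steps 1, 2 and 4 are essentially the paper's argument. The paper computes $(\mathcal L_KJ)Y=\nabla^g_K(JY)-J\nabla^g_KY$ and converts to $\nabla^B$ using $\iota_KH=0$, which is your torsion computation in different notation. It also obtains the Killing property of $JK$ and $[K,JK]=0$ exactly as you do.

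The genuine difference is Step 3. You single it out as the main point, but the paper settles it in one line using integrability of $J$. For any vector field $X$ one has $(\mathcal L_{JX}J)Y-J(\mathcal L_XJ)Y=N_J(X,Y)=0$, equivalently $(JX)^{1,0}=\sqrt{-1}\,X^{1,0}$. So real holomorphic fields are closed under $J$, and $\mathcal L_KJ=0$ immediately gives $\mathcal L_{JK}J=0$. This needs neither the four-dimensional algebra lemma nor any case distinction.

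Your pointwise argument is nevertheless valid. Where $H_p\neq0$, $\iota_KH=0$ and the kernel description in the four-dimensional algebra lemma force $\theta_p\in\mathbb R K_p^\flat$. Hence $\iota_{JK}H$ is a multiple of $*(K^\flat\wedge(JK)^\flat)=\pm|K|^2\omega_\perp$, where $\omega_\perp$ is the K\"ahler form of the orthogonal complement of $\mathrm{span}\{K,JK\}$. This form is $J$-invariant regardless of orientation and sign conventions. Since your formula $(\mathcal L_XJ)Y=JA_XY-A_XJY$ is tensorial, no regularity of the proportionality constant is needed.

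The paper's route buys brevity and generality, since it uses only integrability. Yours buys extra information: $\nabla^g(JK)$ vanishes on $\mathrm{span}\{K,JK\}$ and is a multiple of $J$ on its orthogonal complement. That is exactly the structure behind $d\omega=|\theta|\tau\wedge\omega_\perp$ in the later construction of the associated Vaisman metric.
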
\label{prop--holomorhic killing vector fields}
\begin{proof}
    Let $\nabla^B$ be the Bismut connection, defined as
    \begin{equation}
       \nabla^B=\nabla^g-\frac12 g^{-1}d^c\omega.
        \end{equation}
        Equivalently, 
        \begin{equation}\label{eq--relation between LC and Bismut}
            \nabla^B_YZ=\nabla^g_YZ+\frac12(H(Y,Z,\cdot))^{\#}.
        \end{equation}
         Because $K$ is parallel, we have
        \begin{equation}
            (\mathcal L_KJ)Y=\nabla^g_K(JY)-J\nabla_KY.
        \end{equation}
        Using $\nabla^BJ=0$ and $\iota_KH=0$, we have
        \begin{equation}\label{eq-contraction zero imply}
            \nabla^g_K(JY)-J\nabla_KY=-\frac12(H(K,JY,\cdot))^{\#}+\frac12J(H(K,Y,\cdot)^{\#})=0.
        \end{equation}
        Therefore $\mathcal L_KJ=0$, i.e. $K$ is holomorphic. It follows that $JK$ is also holomorphic.

        Next we show $JK$ is also Killing. Since $K$ is parallel and $\nabla^B J=0$, by \eqref{eq--relation between LC and Bismut}, we have
        \[
            \nabla^g_Y(JK)=-\frac12(H(Y,JK,\cdot))^{\#},\]
        from which we obtain
        \begin{equation}
            g(\nabla^g_Y(JK),Z)=-\frac12 H(Y,JK,Z).
        \end{equation}
        Since $\nabla^g$ is metric-compatible and torsion-free, we get
        \begin{equation}
            \begin{aligned}
                (\mathcal L_{JK}g)(Y,Z)
&=g(\nabla_Y^g(JK),Z)+g(Y,\nabla^g_Z(JK))\\
                &=-\frac12 H(Y,JK,Z)-\frac12 H(Z,JK,Y).
            \end{aligned}
        \end{equation}
        Since $H$ is a $3$-form, we conclude that $\mathcal L_{JK}g=0$.

        Finally use $\mathcal L_KJ=0$, we conclude that
       \(
            [K,JK]=(\mathcal L_KJ)(K)+J([K,K])=0.\)
\end{proof}

It is also known that the soliton metric itself produces a natural symmetry:
\begin{proposition}[Proposition 4.1 in \cite{Streets-Ustinovskiy22}]\label{prop--Lie derivative of soliton vector field}
    Consider the vector field
    \begin{equation}\label{eq-definition of V}
        V\coloneqq\frac12(\theta^\#-\nabla f).
    \end{equation}
    Then it satisfies
    \begin{equation}\label{eq--symmetries}
            \mathcal L_VJ=0,
            \quad \mathcal L_{JV}g=0,\quad
            \mathcal L_V\omega=\rho_B^{1,1}.
    \end{equation}
    In particular, $JV$ is holomorphic and Killing.
\end{proposition}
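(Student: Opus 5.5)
The plan is to compute the Lie derivatives directly from the soliton equations, using the Bismut connection and the structure of Bismut Ricci curvature on pluriclosed surfaces. I would first recall two standard identities for pluriclosed metrics. The Bismut Ricci form satisfies
\[
\rho_B = \rho_C - dd^*\omega\circ J\text{-type correction},
\]
and more usefully the Ricci tensor of $g$ decomposes as
\[
\Ric_g-\tfrac14 H^2 = \rho_B^{1,1}(\cdot,J\cdot)-\tfrac12\mathcal L_{\theta^\#}g
\]
on its symmetric part, while the antisymmetric part of the Bismut Ricci tensor is $-\tfrac12 d^*H$. Plugging these into the two soliton equations gives the single complex-type identity
\[
\rho_B^{1,1}(\cdot,J\cdot)=\tfrac12\mathcal L_{\theta^\#-\nabla f}g=\mathcal L_V g ,
\qquad
\rho_B^{2,0+0,2}=-\tfrac12 d^*H-\tfrac12\iota_{\nabla f}H\text{-terms}=\text{(expressed via } V).
\]

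Step one: from the symmetric part, $\mathcal L_V g=\rho_B^{1,1}(\cdot,J\cdot)$ is $J$-invariant, because $\rho_B^{1,1}$ is of type $(1,1)$. Step two: from the antisymmetric part (the second soliton equation $d^*H+\iota_{\nabla f}H=0$), the $(2,0)+(0,2)$ part of $\rho_B$ must vanish or be expressible in a way that forces $\mathcal L_V J=0$. Concretely, I would use the identity $\mathcal L_X J=$ the $J$-anti-invariant part of $\nabla^B X$ (up to torsion terms), together with $\theta^\#$ and $\nabla f$ relations: $(\mathcal L_V g)^{J\text{-anti}}$ is governed by $\nabla^BV$'s $J$-anti-invariant symmetric part, and $d(V^\flat)^{2,0+0,2}$ by its antisymmetric part, which is $\rho_B^{2,0}$ and thus $-\tfrac12(d^*H+\iota_{\nabla f}H)$-controlled and hence zero. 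Combining, $\nabla^B V$ commutes with $J$, which (since $\nabla^B J=0$ and torsion of $\nabla^B$ is $H$, with the $\iota_V H$ contribution $J$-compatible in dimension $4$ by Lemma~\ref{lem--tensor algebra lemma}) yields $\mathcal L_VJ=0$.

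Step three: with $\mathcal L_VJ=0$ and $\mathcal L_Vg=\rho_B^{1,1}(\cdot,J\cdot)$, apply $J$: $\mathcal L_V\omega=(\mathcal L_V g)(J\cdot,\cdot)=\rho_B^{1,1}$. Then for $JV$, note $\mathcal L_{JV}g = (\mathcal L_V g)(J\cdot,\cdot)$-type expression plus $d(JV)^\flat$ terms; using $\mathcal L_VJ=0$ one gets $\mathcal L_{JV}\omega = d\iota_{JV}\omega= -d(V^\flat)$ type, and $V^\flat=\tfrac12(\theta-df)$ so $d V^\flat=\tfrac12 d\theta$, which on a pluriclosed surface is related to $\rho_B-\rho_C$; the symmetric part of $\mathcal L_{JV}g$ then equals the $J$-twist of the anti-invariant part of $\rho_B$, zero by step two. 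Hence $JV$ is Killing and, as $\mathcal L_{JV}J=J\mathcal L_VJ=0$, holomorphic. The main obstacle is step two: getting the sign and normalization conventions of $\rho_B$, $\theta$, and $H=-*\theta$ to line up so that the second soliton equation exactly kills the $(2,0)+(0,2)$ part; I would fix conventions by checking on the standard Hopf metric.
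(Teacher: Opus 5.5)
The paper gives no proof of this statement; it quotes it from Streets--Ustinovskiy. So your argument has to stand on its own, and it has a genuine gap in Step two.

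Step one is fine up to sign conventions: the pluriclosed identity for $\rho_B^{1,1}(\cdot,J\cdot)$ and the first soliton equation give $\mathcal L_Vg=\rho_B^{1,1}(\cdot,J\cdot)$, which is $J$-invariant. However, since $\nabla^BJ=0$ and $\nabla^B$ has torsion $H$, one has $(\mathcal L_VJ)Y=J\,N(Y)-N(JY)$ with $N(Y)=\nabla^B_YV+H(V,Y,\cdot)^\sharp$. Here $g(N\cdot,\cdot)=\nabla V^\flat+\frac12\iota_VH$, whose antisymmetric part is $\frac12(dV^\flat+\iota_VH)$. So you must also show that the $(2,0)+(0,2)$-part of $dV^\flat+\iota_VH$ vanishes.

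You claim two things: that $(dV^\flat)^{2,0+0,2}$ is controlled by $\rho_B^{2,0}$ and is therefore zero, and that $\iota_VH$ is $J$-compatible. Both are false for every non-static soliton.
\begin{itemize}
\item By the second soliton equation, $dV^\flat=\frac12d\theta=\frac12df\wedge\theta$, as in Proposition~\ref{prop:staticity}. Moreover $\nabla f\perp\theta^\sharp$ by Lemma~\ref{lem--canonical parallel 1-form}, and $\nabla f\perp J\theta^\sharp$ because $0=d^2\omega=d\theta\wedge\omega$ forces $\omega(\nabla f,\theta^\sharp)=0$. Hence $(df\wedge\theta)^{2,0}=\partial f\wedge\theta^{1,0}\neq0$ wherever $\nabla f\neq0$.
\item The form $\iota_VH=-\frac12\iota_{\nabla f}H=\frac12*(\theta\wedge df)$ has $(2,0)+(0,2)$-part $-\frac12(d\theta)^{2,0+0,2}\neq0$.
\item $\rho_B^{2,0+0,2}$ does not vanish either. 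Since $\rho_B-\rho_C$ is a multiple of $d(J\theta)$ and $\rho_C$ is of type $(1,1)$, $\rho_B^{2,0+0,2}$ is the $J$-twist of a nonzero multiple of $(d\theta)^{2,0+0,2}$. Your expression ``$-\frac12d^*H-\frac12\iota_{\nabla f}H$'' is not the right identity; the correct one contains $d\theta$.
\end{itemize}
For the same reason, your intermediate claim that $\nabla^BV$ commutes with $J$ is false: its antisymmetric part is $\frac14\bigl(df\wedge\theta+*(df\wedge\theta)\bigr)$, which is self-dual and nonzero. Checking conventions on the standard Hopf metric would not reveal any of this, since there $\nabla f=0$ and all these terms vanish.

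The missing idea is that the two contributions cancel rather than vanish separately. By Lemma~\ref{lem--tensor algebra lemma} and $d\theta=df\wedge\theta$,
\[
2\bigl(dV^\flat+\iota_VH\bigr)=df\wedge\theta-*(df\wedge\theta),
\]
which is anti-self-dual, hence of type $(1,1)$ on a complex surface. Together with Step one this gives $\mathcal L_VJ=0$, and then $\mathcal L_V\omega=\rho_B^{1,1}$ follows as you say.

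Step three has a further error. You write $\mathcal L_{JV}\omega= d\iota_{JV}\omega$, but $\omega$ is not closed, and you again invoke the false vanishing of $\rho_B^{2,0+0,2}$. Keeping the term $\iota_{JV}d\omega$, with $d\omega=\theta\wedge\omega$ and $\iota_{JV}\omega=-V^\flat$, one finds
\[
\mathcal L_{JV}\omega=-\tfrac12d\theta+\theta\wedge V^\flat+\theta(JV)\,\omega=\theta(JV)\,\omega .
\]
Here $\theta(JV)=-\frac12\omega(\nabla f,\theta^\sharp)=0$ because $d\theta\wedge\omega=0$. Since $\mathcal L_{JV}J=J\,\mathcal L_VJ=0$ by integrability, $JV$ is Killing.
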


Now we study the relation between these holomorphic Killing fields.
\begin{proposition}\label{prop--linearly dependence of Killing fields}
    All four vector fields $K,JK,V,JV$ commute with each other. If they are linearly dependent, then the metric is static, i.e. $\rho_B^{1,1}\equiv 0$.
\end{proposition}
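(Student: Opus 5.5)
The plan is to get every commutation relation from one fact: $K$ commutes with every Killing field. The other brackets then follow from holomorphicity, and the dependence statement follows from $\mathcal L_V\omega=\rho_B^{1,1}$.

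\emph{Step 1: $[X,K]=0$ for every Killing field $X$.} Since $\eta=K^\flat$ is $\nabla^g$-parallel, it is closed and co-closed, so it is harmonic for the unweighted Hodge Laplacian. Let $X$ be a Killing field. The Lie derivative $\mathcal L_X$ commutes with $d$, and it commutes with $d^*$ because the flow of $X$ consists of isometries. Hence $\mathcal L_X\eta$ is again harmonic. On the other hand, Cartan's formula and $d\eta=0$ give
\[
\mathcal L_X\eta=d(\iota_X\eta),
\]
which is exact. A form that is both harmonic and exact vanishes by Hodge theory, so $\mathcal L_X\eta=0$. Since also $\mathcal L_Xg=0$, we get $\mathcal L_XK=0$, that is, $[X,K]=0$. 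By Proposition \ref{prop--real holomorphic Killing fields from harmonic 1-form} and Proposition \ref{prop--Lie derivative of soliton vector field}, both $JK$ and $JV$ are Killing. Therefore $[K,JK]=0$ and $[K,JV]=0$.

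\emph{Step 2: the remaining brackets.} Each of $K$, $JK$, $V$ is real holomorphic, by Proposition \ref{prop--real holomorphic Killing fields from harmonic 1-form} and Proposition \ref{prop--Lie derivative of soliton vector field}. Whenever $\mathcal L_YJ=0$ we have $[Y,JZ]=J[Y,Z]$; we use this repeatedly.
\begin{itemize}
\item From $[K,JV]=J[K,V]$ and Step 1, we get $[K,V]=0$.
\item From $[V,JK]=J[V,K]$, we get $[V,JK]=0$.
\item Applying $\mathcal L_{JK}J=0$ gives $[JK,JV]=J[JK,V]=0$.
\item From $\mathcal L_VJ=0$, we get $[V,JV]=J[V,V]=0$.
\end{itemize}
Together with $[K,JK]=0$, this shows that all four fields commute.

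\emph{Step 3: linear dependence forces $\rho_B^{1,1}=0$.} Suppose $aK+bJK+cV+dJV=0$ with constant real coefficients, not all zero. Since $K$ is nowhere vanishing, $K$ and $JK$ are pointwise linearly independent, so $(c,d)\neq(0,0)$.

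Write the relation as $(c+dJ)V=-(a+bJ)K$. The operator $c+dJ$ acts on $TM$ like the nonzero complex number $c+di$, so it is invertible. Inverting it gives
\[
V=pK+qJK
\]
for constants $p,q$. The fields $K$ and $JK$ are Killing and holomorphic, so they preserve both $g$ and $J$, and hence preserve $\omega$. Therefore
\[
\rho_B^{1,1}=\mathcal L_V\omega=p\,\mathcal L_K\omega+q\,\mathcal L_{JK}\omega=0,
\]
so the metric is static.

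\emph{Main obstacle.} The one step with real content is Step 1. It uses the unweighted harmonicity of a parallel form, not its $\Delta_f$-harmonicity, so no invariance of $f$ under $X$ is needed. It also uses that the Lie derivative of a closed form along a Killing field is both harmonic and exact. The only other point to check is that "linearly dependent" is understood with constant coefficients.
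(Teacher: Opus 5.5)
Your proof is correct, but it takes a different route from the paper in both halves.

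For the commutation relations, the paper takes $A\in\{K,JK\}$ and shows directly that $A$ preserves $\theta$: $A$ preserves $g$ and $J$, hence $\omega$ and $H$. It then shows that $A$ preserves $f$: Lie-differentiating the soliton equation gives $\nabla^2(Af)=0$, so $Af$ is constant, and it vanishes by the divergence theorem. Hence $[A,\theta^\sharp]=[A,\nabla f]=0$, so $[A,V]=0$, and $[A,JV]=0$ follows from $\mathcal L_AJ=0$.

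You instead use the purely Riemannian fact that, on a compact manifold, a parallel field commutes with every Killing field. You apply it to the Killing field $JV$ from Proposition~\ref{prop--Lie derivative of soliton vector field} and pass to $V$ through holomorphicity. Your route never touches $f$, $\theta$, or the soliton equation itself, but it depends on the Killing property of $JV$. The paper's route gives the extra information that $K$ and $JK$ preserve $f$ and $\theta$.

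For the dependence statement, you treat only linear dependence with constant coefficients. That is elementary linear algebra, and it is exactly what the paper uses later: to get $\dim\mathfrak a=3$, and to get $\mu\neq0$ in Case~I. The paper proves more, using the holomorphic section $\sigma=K^{1,0}\wedge V^{1,0}$ of $K_M^{-1}$. If the four fields are pointwise dependent everywhere, then $V^{1,0}=hK^{1,0}$ with $h$ holomorphic, hence constant. So in the non-static case the fields are pointwise independent away from a proper analytic subset.
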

\begin{proof}
    Let $A$ be either $K$ or $JK$, which is holomorphic and Killing. Thus its flow preserves $g, J, \omega$, and hence $H$ and $\theta$. It follows that $[A,\theta^\#]=\mathcal L_A\theta^\#=0$.

    Next we show that $A$ preserves the soliton potential $f$. We take Lie derivative of the equation
    \begin{equation}
        \Ric+\nabla^2 f-\frac14 H^2=0,
    \end{equation}
    and since $A$ is Killing, we get
    \begin{equation}
        0=\mathcal L_A(\nabla^2 f)=\nabla^2(Af).
    \end{equation}
    Thus $Af$ is a constant. Note that $
        \mathrm{div}(A)=\frac12\mathrm{tr}_g(\mathcal L_A g)=0,$
    so by divergence theorem
    \begin{equation}
        \int_M Afd\mu_g=\int_M\mathrm{div}(fA)d\mu_g-\int_M f\mathrm{div}(A)d\mu_g=0.
    \end{equation}
    Therefore $Af\equiv 0$. Using the Killing condition again we see $$[A,\nabla f]=\mathcal L_X(g^{-1}df)=0,$$ so we conclude that $[A,V]=0$. Then using $\mathcal{L}_AJ=0$, we obtain
       $[A,JV]=0.$

    Finally we study their linear dependence. Consider the holomorphic bivector
    \begin{equation}
        \sigma=K^{1,0}\wedge V^{1,0}\in H^0(M, K_M^{-1}),
    \end{equation}
    whose zero locus is either a divisor, or the whole space. In the first case, these vectors are generically linearly independent. Suppose not, i.e. $\sigma\equiv 0$. Since $K^{1,0}$ is nowhere vanishing, we must have $V^{1,0}=h K^{1,0}$ for a global holomorphic function $h$. Since $M$ is compact, $h\equiv a+\sqrt{-1}b$ is a constant. It follows that
    \begin{equation}
        V=aK+bJK
    \end{equation}
     is a Killing field. By Proposition \ref{prop--Lie derivative of soliton vector field}, we conclude that $\rho_B^{1,1}\equiv 0$, i.e. the metric is static.
\end{proof}
\begin{proposition}\label{prop:staticity}
The soliton metric is static if and only if $\nabla f\equiv0$. If the metric is static, then $\theta$ is a parallel $1$-form and $\Ric(\theta^\#,\theta^\#)=0$. In particular, $\vert\theta\vert$ is constant.
\end{proposition}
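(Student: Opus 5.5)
The plan is to treat the two implications separately and, in both, reduce to showing that $\theta$ is harmonic for a suitable (possibly weighted) Hodge Laplacian. Then the Bochner argument of Theorem \ref{thm--structure of compact manifold with ricf geq 0}, together with $\Ric_f\geq 0$ from Lemma \ref{lem--tensor algebra lemma}, forces $\theta$ to be parallel.

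\emph{Step 1: $\nabla f\equiv0$ implies static, and $\theta$ is parallel.} If $f$ is constant, then $\Ric_f=\Ric=\frac14H^2\geq0$ and $d^*H=0$. Since $H=-*\theta$ and $d^*=\pm *d*$ on $3$-forms, $d^*H=0$ is equivalent to $d\theta=0$. Closedness of $H$ gives $d*\theta=0$, i.e.\ $\delta\theta=0$. So $\theta$ is an honest harmonic $1$-form, and the first item of Theorem \ref{thm--structure of compact manifold with ricf geq 0} (with $f$ constant) gives $\nabla^g\theta=0$ and $\Ric(\theta^\#,\theta^\#)=0$. In particular $|\theta|$ is constant. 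Lemma \ref{lem--tensor algebra lemma} gives $\iota_{\theta^\#}H=-*(\theta\wedge\theta)=0$, so the argument of Proposition \ref{prop--real holomorphic Killing fields from harmonic 1-form} applies verbatim to $\theta^\#$: it is a holomorphic Killing field, and so is $J\theta^\#$. Hence $V=\frac12\theta^\#$ preserves both $g$ and $J$, so $\mathcal L_V\omega=0$. By \eqref{eq--symmetries} this gives $\rho_B^{1,1}=0$, i.e.\ the metric is static.

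\emph{Step 2: static implies $\nabla f\equiv 0$.} Suppose $\rho_B^{1,1}\equiv0$. By \eqref{eq--symmetries}, $V$ then preserves both $J$ and $\omega$, hence $g$, so $V$ is Killing. Writing $\nabla f=\theta^\#-2V$ and taking the symmetric part of $\nabla^g$ gives
\begin{equation*}
\nabla^2f=\tfrac12\mathcal L_{\theta^\#}g,
\qquad\text{so}\qquad
\Ric+\tfrac12\mathcal L_{\theta^\#}g=\tfrac14H^2 .
\end{equation*}
It therefore suffices to show that $\theta$ is parallel, since then $\nabla^2 f=0$ and $f$ is constant on the compact $M$.

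To show $\theta$ is parallel, I would first check that $f$ is $V$-invariant, exactly as in the proof of Proposition \ref{prop--linearly dependence of Killing fields}: $\mathcal L_V$ applied to the soliton equation gives $\nabla^2(Vf)=0$, and $\operatorname{div}V=0$ forces $Vf\equiv0$. Next, rewrite the second soliton equation $\delta_fH=0$ using $H=-*\theta$ and Lemma \ref{lem--tensor algebra lemma}, namely $\iota_{\nabla f}H=-*(\theta\wedge df)$. This should read
\begin{equation*}
d\theta=\pm\, df\wedge\theta, \qquad\text{equivalently}\qquad d(e^{\mp f}\theta)=0
\end{equation*}
(the sign is to be fixed). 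In parallel, from $d*\theta=0$ and the relation $\theta=\nabla f+2V$ with $V$ Killing, I would compute $\delta_f$ of the rescaled form $e^{\mp f}\theta$. The goal is to show it is $\Delta_{f'}$-harmonic for an appropriate weight $f'$, or else to apply the weighted Bochner formula of Section \ref{sub--BE theory} directly to $\theta$, using $\Ric_f\geq 0$ and $\ker\Ric_f=\ker H^2=\mathbb R\theta^\#$ from \eqref{equa--kernel of ricci is equal to kernel of H^2} to kill the curvature term.

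This is the main obstacle: getting a clean harmonicity statement for $\theta$ (or $\nabla f$) in the static case. If the weighted Bochner route does not close, the fallback is the Gauduchon--Ivanov-type identity already displayed above, $\Ric+\frac12\mathcal L_{\theta^\#}g=\frac14H^2$ together with $d\theta=\pm df\wedge\theta$. I would contract this identity with $\theta^\#$, take the trace, and integrate against $e^{-f}d\mu_g$, aiming for an integral identity
\begin{equation*}
\int_M|\nabla\theta|^2e^{-f}d\mu_g=0 .
\end{equation*}
Once $\theta$ is parallel, $\nabla f\equiv0$ follows as above, and the remaining claims ($\Ric(\theta^\#,\theta^\#)=0$, $|\theta|$ constant) follow from Step 1.
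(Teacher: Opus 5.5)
Your Step~1 ($\nabla f\equiv0\Rightarrow$ static, with $\theta$ parallel and $\Ric(\theta^\#,\theta^\#)=0$) is correct and is essentially the paper's argument. The detour through Proposition~\ref{prop--real holomorphic Killing fields from harmonic 1-form} is harmless but unnecessary, since $\mathcal L_VJ=0$ is already part of \eqref{eq--symmetries}.

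The gap is Step~2: you never actually prove that a static soliton has $\nabla f\equiv0$. You reduce this to showing that $\theta$ is parallel, then only list candidate strategies without carrying any of them out, and you flag this yourself as the main obstacle. The strategies also do not close as stated. The rescaled form $e^{-f}\theta$ is closed and co-closed only for the opposite measure $e^{f}d\mu_g$. There the Bochner formula involves $\Ric_{-f}=\Ric-\nabla^2 f$, which has no sign. Meanwhile $\theta$ itself is not closed, so it is not $\Delta_f$-harmonic. (The paper obtains parallelism of $e^{-f}\theta$ only later, in Lemma~\ref{lem--canonical parallel 1-form}, by a separate comparison with the $\Delta_f$-harmonic representative.)

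The missing observation is that you need much less than $\nabla\theta=0$. Just take the trace of the identity $\nabla^2f=\frac12\mathcal L_{\theta^\#}g$ that you already derived. Since $V$ is Killing, $\operatorname{div}V=0$. Since $dH=0$ gives $\delta\theta=0$ (which you already used in Step~1), $\operatorname{div}\theta^\#=0$. Hence
\[
\operatorname{div}(\nabla f)=\operatorname{div}\theta^\#-2\operatorname{div}V=0,
\]
so $f$ is harmonic on the compact manifold $M$ and therefore constant. This one-line divergence computation is exactly the paper's proof of this direction. Your Step~1 then supplies the remaining claims ($\theta$ parallel, $\Ric(\theta^\#,\theta^\#)=0$, $|\theta|$ constant).
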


\begin{proof}
Recall that we assume $g$ is pluriclosed, i.e. $dH=0$. It follows that
\begin{equation}
    \delta\theta=-*d*\theta=*dH=0.
\end{equation}
On the other hand, the soliton equation says that
\begin{equation}
 \delta H+\iota_{\nabla f}H=0.
\end{equation}
 By Lemma \ref{lem:four-dimensional-algebra} we have $\iota_XH=-*(\theta\wedge X^\flat)$ and therefore we get
     \(\delta H=*(\theta\wedge df),\)
 thus
\begin{equation}\label{equa--dtheta=df wedge dtheta}
      d\theta=df\wedge\theta.
\end{equation}

If the soliton is static, then \eqref{prop--Lie derivative of soliton vector field} shows that $V$
preserves both $J$ and $\omega$, and hence $g$.  Thus
$\operatorname{div}V=0$. On the other hand, \eqref{eq-definition of V} and
$\delta\theta=0$ give
\[
 0=2\operatorname{div}V
   =\operatorname{div}\theta^\#-\operatorname{div}(\nabla f)
   =-\operatorname{div}(\nabla f).
\]
Thus $f$ is harmonic, and compactness of $M$ gives
$\nabla f\equiv0$.   

Conversely, suppose that $\nabla f\equiv0$. Then Lemma \ref{lem--tensor algebra lemma} implies that $\Ric_g\geq 0$. The equation \eqref{equa--dtheta=df wedge dtheta} implies $d\theta=df\wedge\theta=0$, while pluriclosedness gives
$\delta\theta=0$. Therefore $\theta$ is harmonic, so the standard Bochner technique yields
$$\nabla\theta=0\quad  \Ric(\theta^\#, \theta^\#)=0.$$ In particular, $|\theta|$ is constant, and \eqref{eq-definition of V} gives
$V=\frac 12 \theta^\#$ is parallel and hence Killing. Then combing with \eqref{eq--symmetries},  we obtain  $\rho_B^{1,1}=0$.
\end{proof}

\subsection{Proof of Theorem \ref{thm--classification of complex structure}}\label{sec--proof of complex structure classification}

In this subsection, we finish the classification of complex structures. By passing to a finite cover and pulling back the soliton metric, we assume $(M,J)$ be a \textit{primary} Hopf surface admitting a steady pluriclosed soliton.

If $g$ is static, i.e. $\rho_B^{1,1}\equiv 0$, then the classification result in \cite[Theorem 2]{GI97} and \cite[Theorem 1.4]{streets-tian12} implies that it must be the standard Hopf metric, so in particular the classification of complex structure is done. We include a self-contained proof here for readers' convenience. Our goal is to show that $(M,J)$ is a Hopf surface, whose any finite primary cover is of class~$1$ and diagonal.

By Proposition \ref{prop:staticity}, we know that $f$ is constant and $\theta$ is a non-zero parallel $1$-form. We pass to the universal cover $\widetilde M$, and pull back the soliton metric $g$, denoted by $\widetilde g$. Then there is a de Rham splitting
\begin{equation}
    (\widetilde M,\widetilde g)\cong(N\times\mathbb R, \widetilde g=g_N\oplus dt^2),
\end{equation}
where $\theta=\vert\theta\vert dt$ and $\Ric_g(\theta^{\#},\theta^{\#})=0$. Using Lemma \ref{lem--tensor algebra lemma} it is then not hard to see 
\begin{equation}
    \Ric_{g_N}=\frac{\vert\theta\vert^2}{2}g_N.
\end{equation}
Since $N$ is a simply-connected $3$-fold, it must be the standard round metric with constant sectional curvature.
The complex structure can be identified in the following way. Note that $g$ is a  Vaisman metric since $\theta$ is parallel. The standard theory then shows that $\widetilde M$ admits a K\"ahler cone metric with link diffeomorphic to $N$; cf. \cite{Vaisman79}, see also \cite[Section 9]{Belgun00}.

More precisely, we consider the conformal change
\begin{equation}\label{eq:flat-cone}
 r=\frac2{c} e^{-ct/2},
 \
 g_{\mathrm{cone}}=e^{-ct}\widetilde g
 =dr^2+r^2\frac{c^2}{4}g_N,
\end{equation}
which is K\"ahler since $d(e^{-ct}\widetilde \omega)=0$. Here we denote by $c=\vert\theta\vert_g>0$. The link is the unit round sphere, so it is not hard to see $\widetilde M$ is biholomorphic to $\mathbb C^2\setminus\{(0,0)\}$ using the completion of flat cone metric. This proves that $(M,J)$ is a Hopf surface.

Next we prove that any finite primary cover must be class~$1$ and diagonal. The proof uses the static metric. Denote by $W=\mathbb C^2\setminus\{(0,0)\}$. We fix a biholomorphic identification of $\widetilde M$ with $W$, so the description of K\"ahler cone metric can be rephrased as follows: for any static metric $g$, there exists a biholomorphism $A:W\to W$ and $\lambda>0$, such that
\begin{equation}
    \widetilde g=\lambda A^*g_H,\ g_H=\frac{g_{Euc}}{\vert z\vert^2}.
\end{equation}

Let $\widehat M=W/\langle\gamma\rangle$ be a finite primary cover of $M$. We are going to prove that there exists a suitable coordinate change of $W$, such that $\gamma$ is diagonal.
Since $\gamma$ is a deck transform, $\gamma^*\widetilde g=\widetilde g$. Therefore $A\gamma A^{-1}$ is a holomorphic isometry of standard Hopf metric $g_H$. It follows that we can choose $\rho\in(0,1)$ and $U\in U(2)$, such that $A\gamma A^{-1}=\rho U$. We diagonalize $U$, so we can find $V\in U(2)$, such that $VUV^{-1}$ is a diagonal unitary matrix $\mathrm{diag}(e^{i\theta_1},e^{i\theta_2})$. Thus we write $\gamma$ under the new coordinate $C=VA$, and consider the new gauge $B=AC^{-1}$. Under such choice we have
\begin{equation}
    \gamma^{\prime}=C\gamma C^{-1}=\rho\cdot \mathrm{diag}(e^{i\theta_1},e^{i\theta_2}),\ \tilde g^{\prime}=\lambda(C^{-1})^* B^* g_H.
\end{equation}
Therefore by rebelling, we can simply assume $\gamma$ is diagonal and $\widetilde g=\lambda A^* g_H$. It also shows that $\vert\alpha\vert=\vert\beta\vert=\rho$. This proves that $\widehat M$ is a diagonal Hopf surface.

Then we consider the case that $g$ is non-static. Therefore the four vectors in Proposition \ref{prop--linearly dependence of Killing fields}, are linearly independent. Denote by
\begin{equation}
    G=\mathrm{Aut}^0(M,J),\ G_g=G\cap\mathrm{Isom}(M,g).
\end{equation}
Then we consider the Abelian subalgebra:
\begin{equation}
    \mathfrak a=\mathrm{Span}_\mathbb R\{K,JK,JV\}\subset\mathfrak{aut}(M,J),
\end{equation}
whose flow lines lie in the compact group $G_g$, thus generating a compact torus $T_g$ of dimension at least $3$.
The proof of Theorem \ref{thm--classification of complex structure} now follows from Lemma \ref{lem--compact rank of automorphism of Hopf surfaces}, since class~$0$ primary Hopf surface has compact rank less than $3$. In particular, we also conclude that $T_g$ is a maximal torus when $(M,J)$ is primary and $g$ is non-static.



\subsection{Proof of Theorem \ref{thm--uniqueness of soliton metric}}\label{subsec--proof of uniqueness of soliton metric}

In this section, we prove that all steady pluriclosed solitons $(M,J,g)$ on class $1$ Hopf surfaces must be the one constructed by Streets \cite{Streets19}. This is divided into three cases.

\noindent\textbf{Case I.} $(M,J)$ is primary and $g$ is non-static. In this case, we will show uniqueness and also the surface is \textit{non-diagonal}, that is  $|\alpha|\neq |\beta|$.
We follow the toric method in \cite{Streets-Ustinovskiy21}, so the key is to transfer from the above coordinate-free construction to explicit coordinate-dependent one.
The following result follows from our discussion in Section \ref{subsec--Hopf surfaces}.

\begin{proposition}[\cite{namba74}, see also Section 2.1 in \cite{Streets-Ustinovskiy21}]
    Let $(M,J)$ be a primary Hopf surface of class $1$, and we work over the open subset $\{z_1z_2\neq 0\}$. Consider the logarithmic coordinates
\begin{equation}
    w_j=x_j+\sqrt{-1}y_j=\log z_j.
\end{equation}
Define
\begin{equation}
    R_j=\partial_{x_j}, T_j= J R_j=\partial_{y_j}.
\end{equation}
Write
\begin{equation}
    a=\Re\log\alpha,\ b=\Re\log\beta,
\end{equation}
and consider
\begin{equation}
    Z=aR_1+b R_2.
\end{equation}
Then the maximal compact torus $T_{\mathrm {std}}\subset G$ has Lie algebra
\begin{equation}
    \mathfrak t_{\mathrm {std}}=\mathrm{span}_{\mathbb R}\{T_1, T_2, Z\}.
\end{equation}
\end{proposition}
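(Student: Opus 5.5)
The plan is to write a primary class~$1$ Hopf surface as $W/\langle\gamma\rangle$ with $W=\mathbb C^2\setminus\{(0,0)\}$ and $\gamma(z_1,z_2)=(\alpha z_1,\beta z_2)$, which Theorem~\ref{thm--Kodaira's classification on Hopf surfaces} allows. Types $\mathrm{II}_c$, $\mathrm{III}$ and $\mathrm{IV}$ are handled together. The first step is to identify a $3$-torus inside $G$ and check that its Lie algebra is $\mathfrak t_{\mathrm{std}}$.

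On $\{z_1z_2\neq0\}$ the diagonal subgroup $\{(z_1,z_2)\mapsto(az_1,dz_2)\}\subset G$ acts on $(w_1,w_2)$ by translation by $(\log a,\log d)$. Its Lie algebra is therefore spanned over $\mathbb R$ by $R_1,T_1,R_2,T_2$; these are restrictions of the global holomorphic fields $z_j\partial_{z_j}$ and $\ii z_j\partial_{z_j}$. By the computation in the proof of Lemma~\ref{lem--compact rank of automorphism of Hopf surfaces}, this subgroup is $\mathbb C^2/\Lambda_{\alpha,\beta}\cong\mathbb T^3\times\mathbb R$. Its compact part is the image of the real span of $\Lambda_{\alpha,\beta}$.

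That real span is generated by $(2\pi i,0)$, $(0,2\pi i)$ and $(\log\alpha,\log\beta)$. Modulo the first two, the third contributes only its real part $(a,b)$. As vector fields these correspond to $T_1$, $T_2$ and $aR_1+bR_2=Z$. So the closed subgroup $\exp(\mathfrak t_{\mathrm{std}})$ is exactly $\mathrm{span}_{\mathbb R}\Lambda_{\alpha,\beta}/\Lambda_{\alpha,\beta}$, a compact $3$-torus. This also uses the $\mathbb R$-linear independence already noted in that proof, which holds since $a,b<0$.

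Maximality then follows from Lemma~\ref{lem--compact rank of automorphism of Hopf surfaces}: $\mathrm{rank}_c(G)=3$, so a $3$-dimensional compact torus is maximal. Any maximal torus is conjugate to $T_{\mathrm{std}}$ by the Cartan--Malcev--Iwasawa theorem, which is what justifies calling it ``the'' maximal compact torus.

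The only subtle point is in type $\mathrm{IV}$ and type $\mathrm{III}$, where $G$ is larger than the diagonal group. There one should make sure that $\mathfrak t_{\mathrm{std}}$ still sits in the Lie algebra of a maximal compact subgroup, which again is just the rank count. I would also note that the coordinates $w_j$ are only defined up to $2\pi i\mathbb Z$ and the choice of logarithm of $\alpha,\beta$. This ambiguity changes $\log\alpha$ only by imaginary multiples of $2\pi$, so $Z$ is well defined. That check is the main thing to verify carefully; the rest is bookkeeping from the table.
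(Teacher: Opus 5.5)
Your proposal is correct and is essentially the paper's argument, which derives the statement directly from the computation in Lemma~\ref{lem--compact rank of automorphism of Hopf surfaces}: the diagonal subgroup $\mathbb C^2/\Lambda_{\alpha,\beta}\cong\mathbb T^3\times\mathbb R$ has compact part $\mathrm{span}_{\mathbb R}\Lambda_{\alpha,\beta}/\Lambda_{\alpha,\beta}$, and maximality follows from $\mathrm{rank}_c(G)=3$. You also make explicit the step the paper leaves implicit, namely that the generators $(2\pi i,0)$, $(0,2\pi i)$, $(\log\alpha,\log\beta)$ induce the vector fields $2\pi T_1$, $2\pi T_2$ and $Z+(\arg\alpha)T_1+(\arg\beta)T_2$, so their real span is $\mathrm{span}_{\mathbb R}\{T_1,T_2,Z\}$.
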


By Cartan-Malcev-Iwasawa theorem \cite[Theorem 1.2 in Chapter VII]{borel98}, there exists $\Phi\in G$, such that $\Phi^{-1} T_g\Phi=T_{\mathrm {std}}$. Therefore after replacing $g$ by $\Phi^*g$, we may assume $\mathfrak a=\mathfrak t_{\mathrm {std}}$.

We use the following elementary observation:
\begin{lemma}
    The only $J$-invariant real $2$-planes in $\mathfrak t_{\mathrm {std}}$ is $\mathrm{span}_{\mathbb R}\{Z,JZ\}$.
\end{lemma}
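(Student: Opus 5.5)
The plan is to reduce the statement to finite-dimensional linear algebra in the constant frame $\{R_1,R_2,T_1,T_2\}$. On the open set $\{z_1z_2\neq0\}$ these four fields are pointwise linearly independent over $\mathbb R$, and $J$ acts on them with constant coefficients:
\[
JR_j=T_j,\qquad JT_j=-R_j .
\]
So every real linear combination of them with constant coefficients is determined by its coefficient vector in $\mathbb R^4$. The subspace $\mathfrak t_{\mathrm{std}}$ corresponds to a $3$-dimensional subspace of $\mathbb R^4$, and $J$ acts on $\mathbb R^4$ as a fixed linear complex structure. Hence a real $2$-plane $P\subset\mathfrak t_{\mathrm{std}}$ is $J$-invariant exactly when its coefficient subspace is invariant under this linear map.

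The key step is the observation that any $J$-invariant $P\subset\mathfrak t_{\mathrm{std}}$ satisfies $P=JP\subset J\mathfrak t_{\mathrm{std}}$. Therefore
\[
P\subset \mathfrak t_{\mathrm{std}}\cap J\mathfrak t_{\mathrm{std}} .
\]
I will compute this intersection directly. We have $\mathfrak t_{\mathrm{std}}=\mathrm{span}\{T_1,T_2,aR_1+bR_2\}$, and applying $J$ gives $J\mathfrak t_{\mathrm{std}}=\mathrm{span}\{R_1,R_2,aT_1+bT_2\}$.

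Now write a general element of $\mathfrak t_{\mathrm{std}}$ as $xT_1+yT_2+s(aR_1+bR_2)$ and set it equal to $uR_1+vR_2+w(aT_1+bT_2)$. Comparing coefficients gives
\[
x=wa,\quad y=wb,\quad u=sa,\quad v=sb .
\]
So the intersection consists exactly of the elements $s(aR_1+bR_2)+w(aT_1+bT_2)=sZ+wJZ$, that is, it equals $\mathrm{span}_{\mathbb R}\{Z,JZ\}$.

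This span is genuinely $2$-dimensional because $(a,b)\neq(0,0)$; indeed $a=\Re\log\alpha<0$ and $b=\Re\log\beta<0$, since $0<|\alpha|\le|\beta|<1$. It is also $J$-invariant, because $J(JZ)=-Z$, and $JZ=aT_1+bT_2$ lies in $\mathfrak t_{\mathrm{std}}$. Since $P$ is $2$-dimensional and contained in this $2$-dimensional space, we get $P=\mathrm{span}_{\mathbb R}\{Z,JZ\}$, which proves the lemma.

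The only point needing care, rather than a real obstacle, is justifying that working with coefficient vectors is legitimate. This holds because $J$ is applied pointwise and the frame is pointwise independent, so linear dependence among these vector fields is the same as linear dependence of their constant coefficient vectors.
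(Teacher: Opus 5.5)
Your proof is correct and is essentially the paper's argument. The paper takes $X=pT_1+qT_2+rZ$ in the plane and uses $JX\in\mathfrak t_{\mathrm{std}}$ to force $(p,q)$ to be proportional to $(a,b)$; this is your computation of $\mathfrak t_{\mathrm{std}}\cap J\mathfrak t_{\mathrm{std}}$ done element by element. Your version also makes explicit two points the paper leaves implicit: that $\mathrm{span}_{\mathbb R}\{Z,JZ\}$ is genuinely two-dimensional (since $a,b<0$), and that it is $J$-invariant.
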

\begin{proof}
    Let $\Sigma\subset\mathfrak t_{\mathrm {std}}$ be a real $J$-invariant $2$-plane. Take $0\neq X\in\Sigma$, and write
    \begin{equation}
        X=pT_1+qT_2+rZ.
    \end{equation}
    Then
    \begin{equation}
        JX=-pR_1-qR_2+r(aT_1+bT_2)\in\Sigma\subset\mathfrak t_{\mathrm {std}}.
    \end{equation}
    Therefore $-pR_1-qR_2$ must be proportional to $Z$, so we can find $s\in\mathbb R$ such that
    \begin{equation}
        p=-sa, q=-sb.
    \end{equation}
    Thus
    \begin{equation}
        JX=-s(aR_1+bR_2)+r JZ=-s Z+rJZ.
    \end{equation}
    The result then follows.
\end{proof}

Since $\mathrm{span}_\mathbb R\{K,JK\}$ is also $J$-invariant, we must have 
\begin{equation}\label{equa--span of K JK}
    \mathrm{span}_\mathbb R\{K,JK\}=\mathrm{span}_{\mathbb R}\{Z,JZ\}.
\end{equation}
In particular, $Z, JZ$ are Killing fields of $g$. 
Following \cite{Streets-Ustinovskiy21}, we consider the linear coordinate change
\begin{equation}
    u_1=\frac{b}{a}w_1-w_2,\  u_2=w_2.
\end{equation}
Set $Y\coloneqq\partial_{\Re u_1}$, then 
\begin{equation}
    \begin{aligned}
        &Y=\frac{a}{b}R_1,\ JY=\frac{a}{b}T_1,\\
        & Z=b\partial_{\Re u_2},\  JZ=b\partial_{\Im u_2}.
    \end{aligned}
    \end{equation}
    It follows that
    \begin{equation}
        \mathfrak t_{\mathrm {std}}=\mathrm{span}_\mathbb R\{JY,Z,JZ\}.
    \end{equation}
    Since $JV\in\mathfrak t_{\mathrm {std}}$, we can write
    \begin{equation}
        JV=\mu JY+c Z+d JZ.
    \end{equation}
    By \eqref{equa--span of K JK} and linear independence of $K, JK, JV$, we must have $\mu\neq 0$. Note that $JV$ is also Killing, so
    \begin{equation}
        0=\mathcal L_{JV}g=\mu\mathcal L_{JY}g+c\mathcal L_Zg+d\mathcal L_{JZ}g=\mu\mathcal L_{JY}g.
    \end{equation}
    Therefore $JY$ is also Killing. Hence $g$ is invariant under
    \begin{equation}
        \partial_{\Im u_1},\ \partial_{\Im u_2},\ \partial_{\Re u_2},
    \end{equation}
    which is exactly the symmetry used in \cite{Streets-Ustinovskiy21}. Therefore the soliton equation reduces to the ODE in \cite{Streets-Ustinovskiy21}.

    More precisely, the soliton metric is given by \cite[Proposition 2.5]{Streets-Ustinovskiy21}
    \begin{equation}
        \omega=\ii\left(\frac{b^2}{a^2}\frac{k}{\vert z_1\vert^2}dz_1\wedge d\bar z_1+\frac{1-k}{\vert z_2\vert^2}dz_2\wedge d\bar z_2\right),
    \end{equation}
    where $x=\frac{b}{a}\log\vert z_1\vert^2-\log\vert z_2\vert^2$ and $k=k(x)$ is a solution of the ODE
    \begin{equation}\label{eq--ode}
        k^{\prime}=k(1-k)\big((1-\frac{a}{b})k+\frac{a}{b}\big).
    \end{equation}

    To derive this ODE, there are two places we need to normalize constants. First, we need to normalize the constant $p$ to $1$ as in \cite[Section 2.2, top of Page 1899]{Streets-Ustinovskiy21}. For this we multiply the metric $g$ by a positive constant $\lambda$. Then the soliton equation reduces to the ODE of one variable function $k(x)$ in \cite[Proposition 2.5]{Streets-Ustinovskiy21}. The ambiguity then comes from replacing $k(x)$ by $k(x+\mathrm{const})$, or equivalently, choosing different constants in \cite[equation (2.17)]{Streets-Ustinovskiy21}. Recall that 
    \begin{equation}
        x=u_1+\bar u_1,
    \end{equation}
    so $x$-translation corresponds to flow line of $Y$. The corresponding global biholomorphism is 
    \begin{equation}
        \varphi_s(u_1,u_2)=(u_1+s,u_2),
    \end{equation}
    which in original coordinate is given by
    \begin{equation}\label{eq-extra biholomorphism}
        \varphi_s(z_1,z_2)=(e^{\frac{a}{b}s}z_1,z_2).
    \end{equation}
    It commutes with deck transformation
      \(  \gamma(z_1,z_2)=(\alpha z_1,\beta z_2),\)
    so descends to a biholomorphism of $(M,J)$. 
    Therefore, up to another biholomorphism $\varphi_s$, the metric is equal to $g_0$ constructed in \cite{Streets19,Streets-Ustinovskiy21}. The uniqueness of soliton potential then follows from soliton equation.

Note that from the construction, we have
\[
 a=\log|\alpha|,
 \qquad b=\log|\beta|.\]
From \eqref{eq--ode}, we see that if $a=b$, then the metric corresponding to the solution takes the form (cf. \cite[Bottom of Page 1902]{Streets-Ustinovskiy21})
\[
    \frac{|dz_1|^2+C|dz_2|^2}
    {|z_1|^2+C|z_2|^2},\]
which is static. Since $g$ is assumed to be non-static in this case, it follows that
\[
    |\alpha|\neq|\beta|.
\]

\ 

\noindent\textbf{Case II.} $(M,J)$ is secondary and $g$ is non-static. Let $(g_i,f_i)$, $i=1,2$, be two non-static solitons.  Taking a finite regular primary cover
\[
 \pi\colon M_\gamma\longrightarrow M,
 \qquad F=\operatorname{Deck}(\pi),
\]
where $\gamma=\operatorname{diag}(\alpha,\beta)$.  
Since we can lift holomorphic vector fields from $M$ to $M_\gamma$, 
Lemma \ref{lem--compact rank of automorphism of Hopf surfaces} bounds the compact rank of 
$\Aut^0(M)$ by three. Hence the canonical tori $T_{g_i}$ supplied by
Proposition~\ref{prop--linearly dependence of Killing fields} are maximal three-tori and therefore conjugate
in $\Aut^0(M)$. After conjugating by an element in $\Aut^0(M)$, we may assume $T_{g_1}=T_{g_2}$.

 The holomorphic Killing vector fields are natural under
pullback, so for pullback metrics, we have
\(
 T_{\pi^*g_1}=T_{\pi^*g_2}
\) is a maximal torus inside $\Aut^0(M_\gamma)$. Since $\pi^*g_i$ are $F$-invariant, we know that this torus centralizes $F$. Therefore after conjugating by an element in $\Aut^0(M_\gamma)$, we may assume  
both metrics are $T_{\mathrm{std}}$- and $F$-invariant, and $F$ centralizes
$T_{\mathrm{std}}$.

Then we need the following elementary identity for class~$1$ primary Hopf surfaces:
\begin{equation}\label{eq:standard-torus-centralizer}
 C_{\Aut(M_\gamma)}(T_{\mathrm{std}})
 = (\C^*)^2/\langle\gamma\rangle.
\end{equation}
To see this, for $\vartheta=(\vartheta_1,\vartheta_2)\in
(\mathbb R/2\pi\mathbb Z)^2$, let
\(
 R_\vartheta(z_1,z_2)
 =\bigl(e^{\sqrt{-1}\vartheta_1}z_1,
        e^{\sqrt{-1}\vartheta_2}z_2\bigr).
\)
Since $R_\vartheta$ commutes with $\gamma$, it descends to 
element in  $T_{\mathrm{std}}$.
If $\varphi$ centralizes $T_{\mathrm{std}}$ and
$\widehat\varphi\in\Aut(\mathbb C^2\setminus\{(0,0)\})$ is a lift, then
\[
 \widehat\varphi R_\vartheta
 =\gamma^{k(\vartheta)}R_\vartheta\widehat\varphi.
\]
The integer $k(\vartheta)$ is locally constant and $k(0)=0$, so connectedness
of $U(1)\times U(1)$ gives $k(\vartheta)=0$.    Hartogs' theorem extends
$\widehat\varphi$ and its inverse to  automorphisms of $\C^2$ fixing
the origin; the commutativity with the $U(1)\times U(1)$-action then gives
\[
 \widehat\varphi(z_1,z_2)=(c_1z_1,c_2z_2),
 \qquad c_1,c_2\in\C^*.
\]
The reverse inclusion in \eqref{eq:standard-torus-centralizer} is immediate.
Thus $F$ is diagonal and the biholomorphism in  \eqref{eq-extra biholomorphism}
 commutes with $F$ and hence descends to $M$. This proves metric uniqueness in Case~II.

\ 

\noindent\textbf{Case III.} $g$ is static.  
Let $\widehat M=W/\langle\gamma\rangle$ be a finite primary cover of $M$. It was proved in Section \ref{sec--proof of complex structure classification} that we can assume
\begin{equation}
    \gamma=\rho\cdot\mathrm{diag}(e^{i\theta_1},e^{i\theta_2})
\end{equation}
for some $\rho\in(0,1)$. Moreover, there exists a biholomorphism $A: W\to W$ and $\lambda>0$, such that
\begin{equation}
    \widetilde g=\lambda A^* g_H, \quad  A\gamma A^{-1}=\rho U,\ U\in U(2).
\end{equation}

The Hartogs extension theorem shows that $A$ and $A^{-1}$ can be extended to 0 with $A(0)=0$. Now we expand this identity using homogeneous expansion of $A$. Since $\gamma$ is diagonal, we get 
\begin{equation}
    \rho^k A_k(Uz)=\rho UA_k(z).
\end{equation}
Since $\rho\in(0,1)$, we must have $A_k=0$ for all $k\geq 2$, i.e. $A$ is a linear map.

Suppose $g_i$ are two static metrics. The above construction gives two linear maps $A_i$ on $\mathbb C^2$, such that
\begin{equation}
    \widetilde g_i=\lambda_i A_i^*g_H.
\end{equation}
This allows us to compare these metrics using a more invariant form of Hopf metric.
    To be more precise, let $Q$ be a positive-definite Hermitian form on $\mathbb C^2$. Then we define 
    \begin{equation}
        g_{Q}\vert_z(u,v)\coloneqq\frac{\Re Q(u,v)}{Q(z,z)}.
    \end{equation}
    Denote by $Q_0$ be the standard Hermitian form, and set $Q_i=A_i^* Q_0$. It follows that
    \begin{equation}
        \widetilde g_i=\lambda_i g_{Q_i}.
    \end{equation}

Both metrics are invariant under deck transformations. Let $h$ be any deck transform. Since it preserves $g_{Q_i}$, it must be linear. The Lee form of $g_{Q}$ is given by 
\begin{equation}
    \vartheta=-d\log Q(z,z).
\end{equation}
Since it is deck invariant, we can find $c_i(h)>0$, such that
\begin{equation}
    h^* Q_i=c_i(h)Q_i.
\end{equation}
By taking determinant, we see that $c_i(h)=\vert\det h\vert$.

Let $S$ be the unique positive linear map such that
\begin{equation}
    Q_2(u,v)=Q_1(Su,v).
\end{equation}
We claim that $S$ commutes with any deck transform $h$. In fact,
\begin{equation}
         Q_1(Shu,hv)=Q_2(hu, hv)=h^*Q_2(u,v)\\
         =\vert\det h\vert Q_1(Su,v).
   \end{equation}
   On the other hand, since $h^*Q_1=\vert\det h\vert Q_1$, we also have
   \begin{equation}
       Q_1(hSu, hv)=\vert\det h\vert Q_1(Su,v).
   \end{equation}
   It follows that $S$ commutes with $h$. Then by general theory, $ S^{-\frac12}$ also commutes with all deck transforms. Moreover, using $S$ is $Q_1$-self-adjoint, we obtain that $(S^{-\frac12})^* g_{Q_2}=g_{Q_1}$, and hence $S^{-\frac12}$ descends to $M$ as a biholomorphism $\Phi$ such that
\begin{equation}
    \Phi ^*g_2=\frac{\lambda_2}{\lambda_1}g_1.
\end{equation}

\subsection{Associated Vaisman metric and an alternative proof of Theorem \ref{thm--classification of complex structure}}\label{sec:vaisman}
Recall that a Hermitian metric $g$ is locally conformally K\"ahler (LCK) if
$d\omega=\vartheta\wedge\omega$ for a closed one-form $\vartheta$, and is
Vaisman if its nonzero Lee form $\vartheta$ is $g$-parallel.

The following lemma constructs explicitly the parallel $1$-form appeared in Corollary \ref{cor--rough complex structure determination}, whose proof can be also viewed as a generalization of argument used in Proposition \ref{prop:staticity}:
\begin{lemma}\label{lem--canonical parallel 1-form}
Let $(M,J,g,f)$ be a compact steady pluriclosed
soliton surface with Lee form $\theta$. Suppose $H=-*\theta\not\equiv0$. Put
$\eta:=e^{-f}\theta$. Then $\eta$ is nowhere vanishing and satisfies
\begin{equation}
    \nabla\eta=0,\qquad
 \eta(\nabla f)=0,\qquad
 \Ric_f(\eta^\sharp,\eta^{\#})=0.
\end{equation}
 \end{lemma}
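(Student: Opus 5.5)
Our plan is to show that $\eta=e^{-f}\theta$ is $\Delta_f$-harmonic, and then apply the weighted Bochner argument of Section~\ref{sub--BE theory}, using $\Ric_f\geq0$ from Lemma~\ref{lem--tensor algebra lemma}.

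\emph{Closedness.} From the proof of Proposition~\ref{prop:staticity}, the soliton equation and Lemma~\ref{lem:four-dimensional-algebra} give $d\theta=df\wedge\theta$. Hence
\[
d\eta=e^{-f}(-df\wedge\theta+d\theta)=0 .
\]

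\emph{Co-closedness.} Pluriclosedness gives $\delta\theta=0$. Therefore
\[
\delta_f\eta=\delta(e^{-f}\theta)+e^{-f}\theta(\nabla f)=e^{-f}\delta\theta+e^{-f}\theta(\nabla f)-e^{-f}\theta(\nabla f)=0,
\]
since $\delta(h\theta)=h\delta\theta-\theta(\nabla h)$ and $\nabla e^{-f}=-e^{-f}\nabla f$. So $\eta$ is $\Delta_f$-harmonic.

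\emph{Bochner step.} Integrating the weighted Bochner formula against $e^{-f}d\mu_g$ gives
\[
0=\int_M\big(|\nabla\eta|^2+\Ric_f(\eta^\#,\eta^\#)\big)e^{-f}d\mu_g .
\]
Since $\Ric_f=\tfrac14H^2\geq0$, we get $\nabla\eta=0$ and $\Ric_f(\eta^\#,\eta^\#)=0$. Because $\eta$ is parallel, $|\eta|$ is constant. As $H\not\equiv0$, $\theta\neq0$ somewhere, so $\eta$ is nowhere vanishing.

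\emph{The identity $\eta(\nabla f)=0$.} By the soliton equation, $\Ric_f(\eta^\#,\eta^\#)=\tfrac14H^2(\eta^\#,\eta^\#)$. By Lemma~\ref{lem--tensor algebra lemma},
\[
H^2(\eta^\#,\eta^\#)=2e^{-2f}\big(|\theta|^4-|\theta|^4\big)=0
\]
holds automatically, so this gives no new information. Instead we use $d\eta=0$ and $\nabla\eta=0$: the function $\phi=e^{-f}|\theta|^2=e^{f}|\eta|^2$ then satisfies
\[
\eta(\nabla f)=e^{-f}\theta(\nabla f)=-\delta(\eta),
\]
where the second equality follows from $\delta_f\eta=0$, i.e.\ $\delta\eta=-\eta(\nabla f)$. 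Since $\eta$ is parallel, $\delta\eta=-\operatorname{tr}\nabla\eta=0$. Hence $\eta(\nabla f)=0$.

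There is no real obstacle; the main care needed is in the sign conventions for $\delta(h\theta)$ and $\delta_f=\delta+\iota_{\nabla f}$.
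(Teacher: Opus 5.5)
Your co-closedness computation has a sign error, and the approach fails because of it. With the rules you quote, $\delta(e^{-f}\theta)=e^{-f}\delta\theta-\theta(\nabla e^{-f})=e^{-f}\delta\theta+e^{-f}\theta(\nabla f)$. Using $\delta\theta=0$,
\[
\delta_f\eta=\delta\eta+\eta(\nabla f)=2\,\eta(\nabla f),\qquad \delta_{-f}\eta:=\delta\eta-\eta(\nabla f)=0 .
\]
So $\eta$ is co-closed for the \emph{opposite} weight $e^{f}d\mu_g$, not for $e^{-f}d\mu_g$. Your claim $\delta_f\eta=0$ is equivalent to $\eta(\nabla f)=0$, which is one of the conclusions you are trying to prove, so the argument is circular. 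The weighted Bochner identity for $\delta_{-f}$-harmonic forms involves $\Ric_{-f}=\Ric-\nabla^2 f=\tfrac14H^2-2\nabla^2 f$, which has no sign. Hence you cannot run Bochner directly on $\eta$, and $\nabla\eta=0$ is not established.

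The missing idea, which is how the paper proceeds, is to compare $\eta$ with the $\Delta_f$-harmonic representative $\alpha$ of the class $[\eta]\in H^1(M;\mathbb R)$.
\begin{itemize}
\item Since $\Ric_f\geq 0$, Bochner gives $\nabla\alpha=0$, hence $\delta\alpha=0$.
\item Combined with $\delta_f\alpha=0$, this yields $\alpha(\nabla f)=0$, so $\delta_{-f}\alpha=0$ as well.
\item Write $\eta-\alpha=du$. Then $\int_M|du|^2e^{f}d\mu_g=\int_M u\,\delta_{-f}(\eta-\alpha)\,e^{f}d\mu_g=0$, because $\delta_{-f}$ is the adjoint of $d$ for $e^{f}d\mu_g$.
\item Therefore $\eta=\alpha$ is parallel, and $\eta(\nabla f)=\alpha(\nabla f)=0$.
\end{itemize}
The rest of your proposal is fine:
\begin{itemize}
\item the closedness step $d\eta=0$;
\item the nonvanishing argument;
\item your final step, once corrected to $\delta\eta=+\eta(\nabla f)$, which vanishes because $\eta$ is parallel;
\item your observation that $\Ric_f(\eta^\#,\eta^\#)=\tfrac14H^2(\eta^\#,\eta^\#)=0$ holds automatically by Lemma~\ref{lem--tensor algebra lemma}.
\end{itemize}
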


\begin{proof}

It is proved in Proposition \ref{prop:staticity} that
\begin{equation}
    \delta\theta=0,\ d\theta=df\wedge\theta.
\end{equation}
As a result, we have
\begin{equation}
    d\eta=0,
 \qquad
 \delta_{-f}\eta=0.
\end{equation}
where $\delta_{-f}:=\delta-\iota_{\nabla f}$. Note that this imples that $\eta$ is harmonic with respect to the \textit{opposite} weighted measure $e^fd\mu_g$, in contrast to the construction in Section \ref{sub--BE theory}.

Nevertheless, let $\alpha$ be the $\Delta_f$-harmonic representative of
$[\eta]$, i.e. $d\alpha=0$ and
$\delta_f\alpha=0$. The discussion in Section \ref{sub--BE theory} implies that $\alpha$ is parallel and
$\Ric_f(\alpha^\sharp,\alpha^{\#})=0$.  Since $\alpha$ is parallel, we have
$\delta\alpha=-\mathrm{tr}_g(\nabla\alpha)=0$. Combined with the fact that $\delta_f\alpha=0$, one also has
$\alpha(\nabla f)=0$, and hence $\delta_{-f}\alpha=0$.

Write $\eta-\alpha=du$. Taking the adjoint with respect to the opposite weighted measure $e^f dV_g$ yields
\begin{equation}
   \int_M|du|^2e^fd\mu_g
 =\int_M u\,\delta_{-f}(\eta-\alpha)e^fd\mu_g=0,
\end{equation}
so $\eta=\alpha$. The asserted identities then follow immediately.
Finally, $H=-*\theta\not\equiv0$ implies $\eta\not\equiv0$. Since it is parallel, it is nowhere vanishing.
\end{proof}

For a non-K\"ahler steady pluriclosed soliton, we let
\begin{equation}
    \tau:=\theta/|\theta|=\eta/\vert\eta\vert,\ T:=\tau^\#.
\end{equation}
By Lemma \ref{lem--canonical parallel 1-form}, $\tau$ and
$T$ are parallel.

\begin{proposition}[The associated Vaisman metric]
\label{prop:vaisman-metric}
Let $(M,g,J,f)$ be a compact non-K\"ahler steady
pluriclosed soliton surface. Then $(M,J)$ admits a Vaisman metric.  More precisely,
\begin{equation}\label{eq:hat-omega}
 \widehat\omega
 :=\tau\wedge(J\tau^\#)^\flat
  +\frac{|\theta|}{2}\omega_\perp=\tau\wedge(J\tau^\#)^\flat
  +\frac{|\theta|}{2}\left(\omega-\tau\wedge(J\tau^\#)^\flat\right)
\end{equation}
is the fundamental form of such a metric, whose parallel Lee form is
$2\tau$.  
\end{proposition}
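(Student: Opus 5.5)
The plan is to verify the Vaisman conditions directly, using the parallel unit field $T=\tau^\#$ from Lemma \ref{lem--canonical parallel 1-form} and its $J$-rotation. Set $\xi:=(JT)^\flat$, so that $\omega=\tau\wedge\xi+\omega_\perp$, where $\omega_\perp$ is the restriction of $\omega$ to the $g$-orthogonal complement of $\mathrm{span}\{T,JT\}$. We have $|\theta|=e^{f}|\eta|$, and $|\eta|$ is a positive constant because $\eta$ is parallel and nowhere vanishing. Hence $|\theta|>0$ everywhere and $d|\theta|=|\theta|\,df$. Consequently $\widehat\omega$ is a positive $(1,1)$-form, defining a Hermitian metric
\[
\widehat g=\tau^2+\xi^2+\tfrac{|\theta|}{2}g_\perp .
\]
Moreover $\eta(\nabla f)=0$ gives $Tf=0$, and $d\tau=0$ since $\tau$ is parallel.

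The key computation is $d\xi$. Since $T$ is $g$-parallel and $\nabla^BJ=0$, the computation in Proposition \ref{prop--real holomorphic Killing fields from harmonic 1-form} (with $T$ in place of $K$) gives $g(\nabla^g_Y(JT),Z)=-\tfrac12 H(Y,JT,Z)$. As $JT$ is Killing, this yields $d\xi=-\iota_{JT}H$ up to a universal sign. By Lemma \ref{lem--tensor algebra lemma}, $\iota_{JT}H=-*(\theta\wedge\xi)=-|\theta|\,*(\tau\wedge\xi)$. Since $\omega$ is self-dual, $*(\tau\wedge\xi)=\omega_\perp$, so
\[
d\xi=\pm|\theta|\,\omega_\perp .
\]
Checking the sign against $dH=0$ / $dω=\theta\wedge\omega$ should force $d\xi=-|\theta|\omega_\perp$. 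Then $\widehat\omega=\tau\wedge\xi-\tfrac12 d\xi$, and therefore
\[
d\widehat\omega=-\tau\wedge d\xi=|\theta|\,\tau\wedge\omega_\perp=2\tau\wedge\widehat\omega,
\]
using $\tau\wedge\tau\wedge\xi=0$. So $\widehat g$ is LCK with closed Lee form $2\tau$. I expect pinning down these sign and normalisation conventions (the orientation, the Lee form convention $d\omega=\theta\wedge\omega$ versus the paper's definition, and $H=-d^c\omega$) to be the main obstacle. I would fix them by testing on the standard Hopf metric, where everything is explicit.

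It remains to show that $\tau$ is $\widehat g$-parallel. Note that $\tau=\widehat g(T,\cdot)$ and $\widehat g(T,T)=1$. I will show $T$ is $\widehat g$-Killing. By Proposition \ref{prop--real holomorphic Killing fields from harmonic 1-form}, $T$ and $JT$ are commuting holomorphic $g$-Killing fields, so $\mathcal L_T$ kills $\tau$, $\xi$, $\omega$, and hence $\omega_\perp$. Also $\mathcal L_T|\theta|=|\theta|\,Tf=0$, so $\mathcal L_T\widehat\omega=0$, and $T$ is $\widehat J$-holomorphic with $\widehat J=J$; thus $\mathcal L_T\widehat g=0$. A $1$-form that is both closed and the dual of a Killing field has $\widehat\nabla\tau$ simultaneously antisymmetric and symmetric, hence $\widehat\nabla\tau=0$. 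This gives the Vaisman property with parallel Lee form $2\tau$.
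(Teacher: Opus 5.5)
Your argument is correct and has the same overall structure as the paper's proof. The positivity of $\widehat\omega$ comes from $|\theta|=e^{f}|\eta|>0$. The identity $d\xi=-|\theta|\omega_\perp$ (your $\xi=(J\tau^\#)^\flat$) gives $d\widehat\omega=2\tau\wedge\widehat\omega$. Finally, $\widehat\nabla\tau=0$ because $\tau$ is closed and is the $\widehat g$-dual of the $\widehat g$-Killing field $T$. Your rewriting $\widehat\omega=\tau\wedge\xi-\tfrac12 d\xi$ usefully makes explicit why the paper's term $d(\tfrac{|\theta|}{2}\omega_\perp)$ vanishes.

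The one place you take a different route is the computation of $d\xi$. The paper uses Cartan's formula: $d\xi=d\iota_T\omega=\mathcal L_T\omega-\iota_T d\omega=-\iota_T(\theta\wedge\omega)=-|\theta|\omega_\perp$. This needs only $d\omega=\theta\wedge\omega$ and $\mathcal L_T\omega=0$, and you already prove the latter in your last paragraph. So the sign you flag as the main obstacle never arises, and no test on the Hopf metric is needed.

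Your torsion route also works, and with the paper's conventions it fixes the sign without any test case. Use $\nabla^B=\nabla^g+\tfrac12 g^{-1}H$, $H=-*\theta$, and the complex orientation. Antisymmetrizing $g(\nabla_Y JT,Z)=-\tfrac12 H(Y,JT,Z)$ gives $d\xi=+\iota_{JT}H$, not $-\iota_{JT}H$, and hence $d\xi=-|\theta|\omega_\perp$. Note that $dH=0$ could never have decided the sign, since $d(|\theta|\omega_\perp)=0$ holds for either choice; only $d\omega=\theta\wedge\omega$ does. What the Cartan route buys is independence from the relative sign convention between $H=-*\theta$ and $d\omega=\theta\wedge\omega$.

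One small omission: reusing the computation of Proposition \ref{prop--real holomorphic Killing fields from harmonic 1-form} ``with $T$ in place of $K$'' requires $\iota_TH=0$, not just $\nabla T=0$. The same hypothesis is needed when you later invoke that proposition to get $T$, $JT$ holomorphic and Killing. It is immediate from Lemma \ref{lem--tensor algebra lemma}: since $T\in\mathbb R\theta^\#$, we get $\iota_TH=-*(\theta\wedge\tau)=0$. The paper instead derives it from $\Ric_f(T,T)=0$ and the soliton equation.
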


\begin{proof}
Since $g$ is non-K\"ahler, $H=-*\theta\not\equiv0$, so the preceding lemma
applies. Therefore we have
\begin{equation}
    T(f)=0,
 \qquad \Ric_f(T,T)=0.
\end{equation}
The first implies that $T(|\theta|)=0$, since
$|\theta|=e^f|\eta|$ and $\vert\eta\vert$ is a constant. Then using $H^2(T,T)=2|\iota_TH|^2$ and the soliton equation, 
 we obtain \(
0=\Ric_f(T,T)=\frac12|\iota_TH|^2,
\) and hence
\begin{equation}
    \iota_TH=0.
\end{equation}
Using $\nabla^B J=0$, $\iota_T H=0$, and the fact that $T$ is parallel, the same argument as in Proposition \ref{prop--holomorhic killing vector fields} yields
$T$ is real holomorphic.  In particular,
$\mathcal L_T(JT)=(\mathcal L_TJ)T=0$. Together with $\mathcal L_T|\theta|=0$,
$\mathcal L_Tg=0$ and $\mathcal L_T\tau=0$, this shows that the flow of $T$ preserves $\omega$,
$(J\tau^\#)^\flat$, and $\omega_\perp$.

By definition, we have $\iota_T\omega=(J\tau^\#)^\flat$. Since the Lee form is characterized by $d\omega=\theta\wedge\omega$, we then have $d\omega=|\theta|\tau\wedge\omega_\perp$. Cartan's formula gives
\begin{equation}\label{eq:dzeta}
 d(J\tau^\#)^\flat=-\iota_Td\omega=-|\theta|\omega_\perp.
\end{equation}
The form $\widehat \omega$ in \eqref{eq:hat-omega} is positive of type $(1,1)$:
$\tau\wedge(J\tau^\#)^\flat$ is its positive block on $\mathrm{Span}\{ T,JT\}$, while
$|\theta|/2$ rescales the positive block on the orthogonal $J$-invariant
plane. Hence it is the fundamental form of a Hermitian metric
$\widehat g$. The identities above give
\begin{align*}
 d\widehat\omega
 &=-\tau\wedge d(J\tau^\#)^\flat
   +d\left(\frac{|\theta|}{2}\omega_\perp\right)\\
 &=|\theta|\tau\wedge\omega_\perp
 =2\tau\wedge\widehat\omega.
\end{align*}
Thus $\widehat g$ is locally conformally K\"ahler with Lee form $2\tau$.
The $\widehat g$-dual of $\tau$ is still $T$, since $\widehat\omega$ only rescales the orthogonal direction. Moreover, every term in
\eqref{eq:hat-omega} is $T$-invariant, hence
$\mathcal L_T\widehat g=0$. The symmetric and alternating parts of
$\nabla^{\widehat g}\tau$ are respectively
$\frac12\mathcal L_T\widehat g$ and $\frac12d\tau$, so
$\nabla^{\widehat g}\tau=0$. Therefore $2\tau$ is $\widehat g$-parallel and nonzero. It follows that
$\widehat g$ is Vaisman.
\end{proof}

Using the existence of Vaisman metric, we can give a different proof of Theorem \ref{thm--classification of complex structure} without using Kodaira's characterization \cite[Theorem 41]{KodairaIII} or the topological characterization of K\"ahler surfaces, as follows. By Lemma \ref{lem--tensor algebra lemma} and Bochner technique, we know that $b_1(M)\leq 1$. We may assume $H\not\equiv 0$, so by Lemma \ref{lem--canonical parallel 1-form} there exists a non-zero parallel $1$-form. In particular, we must have $b_1(M)=1$. Moreover, Proposition~\ref{prop:vaisman-metric} gives a Vaisman
metric $(\widehat g,\widehat\omega)$ on $(M,J)$.  

Let $\widetilde M$ be the universal cover of $M$. The same argument in Corollary \ref{cor--rough complex structure determination} implies that there is a de Rham splitting for metric $g$, i.e. $\widetilde M=N\times\mathbb R$ for a compact simply-connected manifold $N$. Since $\ker\widetilde\tau= TN$ and $\tau$ is also $\widehat g$-parallel, the pull-back of $\widehat g$ to $\widetilde M$ also splits as product $N\times\mathbb R$.

The de Rham splitting for $\widetilde {\widehat g}$ gives
\begin{equation}
    (\widetilde M,\widetilde {\widehat g})\cong(\mathbb R\times N,dt^2+g_N),\ \widetilde\tau=dt.
\end{equation}
It follows from a similar argument for static case in Section \ref{sec--proof of complex structure classification} that $\widetilde M$ admits a K\"ahler cone metric. 
By \cite{van11}, $\widetilde M\cup\{o\}$ defines a normal affine variety. Since $\pi_1(L)=\{1\}$, by Mumford's theorem \cite{mumford61} we see $o$ is a regular point of $\widetilde M\cup\{o\}$, thus $\widetilde M\cup\{o\}$ is biholomorphic to $\mathbb C^2$ by a standard commutative algebra argument. Therefore $(M,J)$ is a Hopf surface.

Let $p\colon \widehat M\to M$ be an arbitrary primary cover. The
pullback of $\widehat\omega$ is again Vaisman, and Belgun's classification
\cite[Theorem~1]{Belgun00} then implies that the primary Hopf surface
$\widehat M$ is of class~$1$. 

\begin{remark}
 After the existence of Vaisman metric, one can rush immediately to the last step and use Belgun's result to conclude the Theorem \ref{thm--classification of complex structure}, given that we already know $(M,J)$ is a Hopf surface, by \cite{Streets19} or Corollary \ref{cor--rough complex structure determination}. We use the above argument instead, since the known relation between Vaisman metric and K\"ahler cone gives a more elementary proof of the fact that $(M,J)$ is a Hopf surface.
\end{remark}

\bibliographystyle{alpha}
\bibliography{references}

\end{document}